\documentclass[pdflatex,sn-mathphys-num]{sn-jnl}

\usepackage{mathtools}%
\usepackage{graphicx}%
\usepackage{multirow}%
\usepackage{amsmath}
\allowdisplaybreaks
\usepackage{amsmath,amssymb,amsfonts}%
\usepackage{amsthm}%
\usepackage{mathrsfs}%
\usepackage[title]{appendix}%
\usepackage{xcolor}%
\usepackage{textcomp}%
\usepackage{manyfoot}%
\usepackage{booktabs}%
\usepackage{algorithm}%
\usepackage{algorithmicx}%
\usepackage{algpseudocode}%
\usepackage{listings}%
\usepackage{mathrsfs}%

\theoremstyle{thmstyleone}%
\newtheorem{theorem}{Theorem}[subsection]
\newtheorem{corollary}{Corollary}[subsection]
\newtheorem{lemma}{Lemma}[subsection]
\newtheorem{remark}{Remark}[subsection]

\newtheorem{example}{Example}[subsection]
\newtheorem{definition}{Definition}[subsection]%

\theoremstyle{thmstyletwo}%

\theoremstyle{thmstylethree}%
\begin{document}
	
	\title[Article Title]{Colored Partitions and Minimal Excludants}
	%
	%
	\author[1]{\fnm{Suverchla} \sur{Arora}}\email{ssuverchla.phd23@gmail.com}
	\equalcont{These authors contributed equally to this work.}
	
	\author[1]{\fnm{Meenakshi} \sur{Rana}}\email{mrana@thapar.edu}
	\equalcont{These authors contributed equally to this work.}
	
	\author*[2]{\fnm{Shruti} \sur{Sharma}}\email{sharma\_stary2k@yahoo.co.in}
	\equalcont{These authors contributed equally to this work.}
	
	\author[1]{\fnm{Nilanjan} \sur{Bag}}\email{nilanjanb2011@gmail.com}
	\equalcont{These authors contributed equally to this work.}
	
	\affil[1]{\orgdiv{Department of Mathematics}, \orgname{Thapar Institute of Engineering and Technology}, \orgaddress{\street{Bhadson Road}, \city{Patiala}, \postcode{147004}, \state{Punjab}, \country{India}}}
	
	\affil*[2]{\orgdiv{Yadavindra Department of Sciences}, \orgname{Punjabi University Guru Kashi Campus}, \orgaddress{\street{Talwandi Sabo}, \city{Bhatinda}, \postcode{151302}, \state{Punjab}, \country{India}}}
	%
	%
	\abstract{In this article, we introduce the concept of minimal excludants in colored partitions and extend it to the framework of least $r$-gaps. In particular, we focus on $k$-colored partitions and cubic partitions, deriving the generating functions for the sums of minimal excludants and least $r$-gaps, and analyzing their asymptotic behaviors. Notably, the presented theory is comprehensively extended by the systematic generalization of each result for minimal excludants in $k$-colored partitions to the case of least $r$-gaps. We introduce generalized cubic partitions and discover the minimal excludant, as well as several additional results.}
	\keywords{Minimal Excludant, Least $r$-gap, $k$-colored Partitions, Cubic Partitions,  Asymptotic Formulas.}
	
	
	\pacs[2010 Mathematics Subject Classification]{05A17, 11P81, 11P82}
	
	\maketitle
	
	\section{Introduction}\label{sec1}
	A partition $\Lambda$ of a positive integer $\nu$ is a finite non-increasing sequence of positive integers $(\Lambda_{1}, \Lambda_{2},\ldots, \Lambda_{p})$ such that 
	$$\Lambda\vdash\Lambda_{1}+\Lambda_{2}+\cdots+\Lambda_{p}=\nu,$$
	where the $\Lambda_{j}$'s, for $1\leq j\leq p$, are called the parts of the partition $\Lambda.$ Let the set of all partitions of $\nu$ be denoted by $\mathcal{P}(\nu),$ and let the count of partitions of $\nu$ be denoted by $|\mathcal{P}(\nu)|.$ For example, $\mathcal{P}(5)=\left\{5,~4+1,~3+2,~3+1+1,~2+2+1,~2+1+1+1,~1+1+1+1+1\right\}$. Thus, the count of partitions of $5,$ denoted by $|\mathcal{P}(5)|$ is $7.$
	The generating function for $|\mathcal{P}(\nu)|$ is
	\begin{eqnarray*}
		\sum\limits_{\nu=0}^{\infty}|\mathcal{P}(\nu)|q^{\nu}=\prod_{\nu=1}^{\infty}\frac{1}{(1-q^{\nu})}=\frac{1}{(q;q)_\infty},
	\end{eqnarray*}
	where, $|q| < 1$ and
	\begin{eqnarray*}
		(a;q)_\infty:=\prod_{\nu=1}^{\infty}(1-aq^{\nu-1}).
	\end{eqnarray*}
	Throughout this article, we also fix
	\begin{eqnarray*}
		(a;q)_0&:=1 ~~~\text{and}~~
		(a;q)_t&:=\prod_{\nu=1}^{t}(1-aq^{\nu-1})~~~\text{with}~ t\in\mathbb{N}.
	\end{eqnarray*}
	In 2006, Grabner and Knopfmacher \cite{grabner} introduced a significant concept in the theory of partitions, known as the smallest gap. The smallest gap of a partition $\Lambda$ is the least positive integer that is not a part of $\Lambda.$ In 2019, Andrews and Newman \cite{andrews_1} revisited the definition of the smallest gap of a partition $\Lambda$ and renamed as the minimal excludant, denoted by $mex(\Lambda).$ The sum of the minimal excludants across all the partitions of $\nu$ is denoted by $\sigma mex(\nu)$ with the corresponding generating function
	\begin{equation}\label{mex}
		\sum\limits_{\nu=0}^{\infty}\sigma mex(\nu) q^{\nu} = (-q;q)^2_{\infty},
	\end{equation}
	where
	$
	\sigma mex(\nu) := \sum_{\Lambda \in \mathcal{P}(\nu)}mex(\Lambda).
	$
	Grabner and Knopfmacher \cite{grabner} gave an asymptotic formula for $\sigma{mex(\nu)},$ which is
	\begin{eqnarray}\label{asymp_mex}
		\sigma{mex(\nu)}\sim \frac{1}{4\sqrt[4]{6\nu^3}}\left(\exp\!\left( \pi \sqrt{\frac{2\nu}{3}} \right)\right)~~~~
		\text{as}~~ \nu \to \infty.
	\end{eqnarray}
	Also, Andrews and Newman \cite{andrews_1} explored the concept of the minimal odd excludant of a partition $\Lambda,$ denoted by $moex(\Lambda),$ defined as the smallest positive odd integer that does not appear as a part of $\Lambda.$ The sum of the minimal odd excludants across all the partitions of $\nu$ is denoted by $\sigma moex(\nu)$ with the corresponding generating function
	\begin{eqnarray}
		\sum\limits_{\nu=0}^{\infty}\sigma moex(\nu) q^{\nu} = (-q;q^2)^2_{\infty}(-q;q)_{\infty},\label{moex}
	\end{eqnarray}
	where
	$
	\sigma moex(\nu) :=  \sum_{\Lambda \in \mathcal{P}(\nu)}moex(\Lambda). 
	$
	In  2024, Kaur \cite{KaurT} introduced the concept of the minimal even excludant of a partition $\Lambda,$ denoted by $meex(\Lambda),$ defined as the smallest positive even integer that does not appear as a part of $\Lambda.$ The sum of the minimal even excludants across all the partitions of $\nu$ is denoted by $\sigma meex(\nu)$ with the corresponding generating function
	\begin{eqnarray}
		\sum\limits_{\nu=0}^{\infty}\sigma meex(\nu) q^{\nu} =2(-q^2;q^2)^2_{\infty}(-q;q)_{\infty}, \label{meex}
	\end{eqnarray}
	where
	$
	\sigma meex(\nu) := \sum_{\Lambda \in \mathcal{P}(\nu)}meex(\Lambda).
	$
	In 2024, Baruah \textit{et al.}
	\cite{baruah} extended the study of minimal excludants by introducing two new functions $\sigma_o mex(\nu)$ and $\sigma_e mex(\nu)$. For a positive integer $\nu,$
	\begin{eqnarray*}
		\sigma_o mex(\nu):=\sum\limits_{\substack{\Lambda\in \mathcal{P}(\nu)\\ 2\nmid mex(\Lambda)}}mex(\Lambda) ~~~\text{and }~~~ \sigma_e mex(\nu):=\sum\limits_{\substack{\Lambda\in \mathcal{P}(\nu)\\ 2\mid mex(\Lambda)}}mex(\Lambda).
	\end{eqnarray*}
	Moreover, they established the following generating functions:
	\begin{eqnarray}
		\sum\limits_{\nu=0}^{\infty}\sigma_o mex(\nu)q^{\nu}&=&\frac{1}{2}\left((-q;q)^2_\infty+(q;q)^2_\infty\right) \label{sigma_o},\\
		\sum\limits_{\nu=0}^{\infty}\sigma_e mex(\nu)q^{\nu}&=&\frac{1}{2}\left((-q;q)^2_\infty-(q;q)^2_\infty\right).\label{sigma_e}
	\end{eqnarray}
	In 2010, for a positive integer $r$, Wagner \cite{Wagner} studied the smallest gap as the smallest integer whose multiplicity is less than $r$. In particular, for $r=1,$ one can obtain the smallest gap.
	In 2020, Ballantine and Merca \cite{Ballantine_1} revisited the definition of the smallest gap of a partition $\Lambda,$ referred to it as the least $r$-gap, and denoted it by $g_r(\Lambda)$. The sum of the least $r$-gaps across all the partitions of $\nu$ is denoted by $\sigma_r{mex}(\nu),$ with the corresponding generating function
	\begin{equation}\label{least_gap_merca}
		\sum\limits_{\nu=0}^{\infty}\sigma_rmex(\nu)q^{\nu}=\frac{ (q^{2r};q^{2r})_\infty}{(q^r;q^{2r})_\infty(q;q)_\infty},
	\end{equation}
	where
	$\sigma_r{mex}(\nu):=\sum_{\Lambda \in \mathcal{P}(\nu)} g_r(\Lambda).$\\
	
	In this paper, we will extend these studies on minimal excludants to colored partitions. 
	In Section \ref{sec2}, we have discussed all the necessary preliminaries; furthermore, we have provided definitions of minimal excludants and least $r$-gaps in the context of colored partitions. In Section \ref{sec3}, we obtain generating functions for the sum of minimal excludants over $k$-colored partitions; in addition, we examine their asymptotic behavior. Section \ref{sec4} is devoted to the study of least $r$-gaps in the context of $k$-colored partitions, wherein we provide generating functions for these gaps and analyze their asymptotic behavior. Section \ref{sec5} addresses minimal excludants over generalized cubic partitions.
	\section{Preliminaries}\label{sec2}
	In the literature, colored partitions appear in many different ways like $n$-colored partitions \cite{n_color}, $k$-colored partitions \cite{k_color}, cubic partitions \cite{Merca_cubic} etc. We define a general version of the colored partitions which includes all these types so that our definition of minimal excludant suffices for each type of colored partition. A colored partition $\Lambda$ of a positive integer $\nu$ is a partition in which each part may occur in multiple colors. We call these colored parts as colored positive integers. For denoting different colors of a part, we use subscripts. If each part can appear with a fixed number of subscripts, say $k,$ then these are called $k$-colored partitions and if each part can occur with as many subscripts as the size of the part then these are called $n$-colored partitions. Here, for any part $\alpha_i,$ $\alpha$ will be called the size and $i$ is the color of the part. Thus, for 2-colored partitions a part of any size can occur with subscripts 1 or 2. For $n$-colored partitions, a part of size $n$ can occur as $n_1,n_2,n_3,\cdots,n_n$. The parts of colored partitions follow lexicographic ordering; i.e., for any two parts $\alpha_i$ and $\beta_j,$
	$$
	\alpha_i 
	\begin{cases}
		< \beta_j, & \text{if } (\alpha=\beta  ~\text{and}~ i< j)~\text{or}~(\alpha<\beta) ;\\
		> \beta_j, & \text{if } (\alpha=\beta  ~\text{and}~ i> j)~\text{or}~(\alpha>\beta) ;\\
		=\beta_j, & \text{otherwise}.
	\end{cases}
	$$
	Keeping this order structure in mind, we can now write any type of colored partitions of any positive integer $\nu$ using the definition of partitions. For example, there are ten 2-colored partitions of $3,$ given by $3_1,3_2,2_1+1_1,2_1+1_2,2_2+1_1,2_2+1_2,1_1+1_1+1_1,1_2+1_1+1_1,1_2+1_2+1_1,1_2+1_2+1_2.$\\ 
	With this terminology, the minimal excludant over any type of colored partitions can be defined as ``the smallest colored positive integer that is not a part of the partition". To define the minimal odd (even) excludant, we will agree to consider the parity of the size of the colored part. Thus ``the minimal odd (even) excludant of a colored partition $\Lambda$ is the smallest colored positive integer with odd (even) size that does not appear as a part of $\Lambda$". Now, the minimal excludant is generalized to least $r$-gap as follows:
	\begin{definition}
		For a fixed positive integer $r,$ the least $r$-gap of a colored partition $\Lambda$ of a positive integer $\nu$ is defined as the smallest colored positive integer that does not occur at least $r$ times as a part of $\Lambda$. 
	\end{definition}
	\noindent The least odd (even) $r$-gap over colored partitions can be generalized in a similar manner by considering the parity of the part size only. \\
	
	Since we are studying minimal excludants for $k$-colored partitions, we fix certain notation for these partitions. The set of $k$-colored partitions of a positive integer $\nu$ will be denoted by $\mathcal{P}_k(\nu)$, the minimal excludant for a $k$-colored partition $\Lambda$ by $mex_k(\Lambda)$,  and the sum of minimal excludants for all the $k$-colored partitions of $\nu$ by $\sigma mex_k(\nu)$; that is,
	\begin{align*}
		\sigma mex_k(\nu) &:= \sum\limits_{\Lambda \in \mathcal{P}_k(\nu)}mex_k(\Lambda).
	\end{align*}
	The notation for minimal odd (even) excludant is also extended to $k$-colored partitions by simply adding subscript $k$ to the notations for ordinary partitions. The notations $\sigma_o mex_k(\nu)$ and $\sigma_e mex_k(\nu)$ also follow similarly.\\
	To denote least $r$-gap, least odd $r$-gap and least even $r$-gap of a $k$-colored partition $\Lambda$ we use $gap_k^r(\Lambda),$ $ogap_k^r(\Lambda),$ and $egap_k^r(\Lambda),$ respectively. The notations for the sums of these $r$-gaps are obtained by simply appending $\sigma$ to the notations of $r$-gaps. Thus, 
	\begin{align*}
		\sigma gap_k^r(\nu) \!:=\!\!\!\!\!\!\sum\limits_{\Lambda \in \mathcal{P}_k(\nu)}\!\!\!\!gap_k^r(\Lambda), ~
		\sigma ogap_k^r(\nu) \!:=\!\!\!\!\!\!\sum\limits_{\Lambda \in \mathcal{P}_k(\nu)}\!\!\!\!ogap_k^r(\Lambda), ~\text{and}~
		\sigma egap_k^r(\nu) \!:=\!\!\!\!\!\!\sum\limits_{\Lambda \in \mathcal{P}_k(\nu)}\!\!\!\!egap_k^r(\Lambda).
	\end{align*}
	Also,
	\begin{align*}
		\sigma_o gap_k^r(\nu):=\sum\limits_{\substack{\Lambda\in \mathcal{P}_k(\nu)\\ 2\nmid gap_k^r(\Lambda)}}gap_k^r(\Lambda),~~~
		\sigma_e gap_k^r(\nu):=\sum\limits_{\substack{\Lambda\in \mathcal{P}_k(\nu)\\ 2\mid gap_k^r(\Lambda)}}gap_k^r(\Lambda).
	\end{align*}
	Next, in order to have a clear understanding of the minimal excludants over $k$-colored partitions, we take up an example.
	\begin{example}
		The minimal excludants and their sums corresponding to all $2$-colored partitions of $3$ are as follows:
	\end{example}
	\begin{center}
		\scalebox{0.9}{\begin{tabular}{|c|c|c|c|}
				\hline
				2-colored partitions $\Lambda$ of 3 & $ mex_{2}(\Lambda)$&$ moex_{2}(\Lambda)$&$ meex_{2}(\Lambda)$\\
				\hline
				$3_1$ & $1_1$&$1_1$&$2_1$\\
				$3_2$ & $1_1$&$1_1$&$2_1$\\
				$2_1+1_1$ & $1_2$&$1_2$&$2_2$\\
				$2_1+1_2$ & $1_1$&$1_1$&$2_2$\\
				$2_2+1_1$ & $1_2$&$1_2$&$2_1$\\
				$2_2+1_2$ & $1_1$&$1_1$&$2_1$\\
				$1_1+1_1+1_1$ & $1_2$&$1_2$&$2_1$\\
				$1_2+1_1+1_1$ & $2_1$&$3_1$&$2_1$\\
				$1_2+1_2+1_1$ & $2_1$&$3_1$&$2_1$\\
				$1_2+1_2+1_2$ & $1_1$&$1_1$&$2_1$\\
				\hline
				& $\sigma mex_{2}(3)$=12&$\sigma moex_{2}(3)$=14&$\sigma meex_{2}(3)$=20\\
				\hline
		\end{tabular}}
	\end{center}
	Using standard techniques of partition theory, we can write the generating function for $|\mathcal{P}_k(\nu)|$(the count of $k$-colored partitions of $\nu$) as 
	\begin{eqnarray*}
		\sum\limits_{\nu=0}^{\infty}|\mathcal{P}_k(\nu)|q^{\nu}=\prod_{\nu=1}^{\infty}\frac{1}{(1-q^{\nu})^k}=\frac{1}{(q;q)_\infty^k}.
	\end{eqnarray*}
	To express some of the generating functions, for minimal excludants in closed form, we will make use of the Jacobi's Triple Product Identity (JTP) \cite[eq.(1.1.1),~p.~1]{power_of_q}. For $a\neq0$ and $|q|<1,$ we have
	\begin{eqnarray}\label{JTP}
		\sum\limits_{\nu=-\infty}^{\infty}a^\nu q^{\nu^2}=\prod_{\nu=1}^{\infty}(1+aq^{2\nu-1})(1-q^{2\nu})(1+a^{-1}q^{2\nu-1}),\nonumber
	\end{eqnarray}
	setting  $a=zq^k$ and $q=q^k$, we get
	\begin{eqnarray}\label{JTP_1}
		\sum\limits_{\nu=-\infty}^{\infty}z^\nu q^{k\nu(\nu+1)}=(-zq^{2k};q^{2k})_\infty\left(\frac{-1}{z};q^{2k}\right)_\infty(q^{2k};q^{2k})_\infty,
	\end{eqnarray}
	while $a=1$ and $q=q^k$ yield
	\begin{eqnarray}\label{JTP_2}
		\sum\limits_{\nu=-\infty}^{\infty}q^{k\nu^2}&=(-q^k;q^{2k})_\infty(-q^k;q^{2k})_\infty(q^{2k};q^{2k})_\infty=(-q^k;q^{2k})^2_\infty(q^{2k};q^{2k})_\infty.
	\end{eqnarray}
	Also, with $a=q^{l}$ and $q=q^{s+l}$, we get 
	\begin{eqnarray}\label{JTP_3}
		\sum\limits_{\nu=-\infty}^{\infty}q^{s\nu^2+l\nu(\nu+1)}&=(-q^{s+2l};q^{2(s+l)})_\infty(q^{2(s+l)};q^{2(s+l)})_\infty(-q^s;q^{2(s+l)})_\infty.
	\end{eqnarray}
	
	In obtaining the asymptotic formulas for the sum of minimal excludants, the following two identities \cite{p_s_kaur} play a significant role:
	\begin{eqnarray}
		\frac{1}{(e^{-y};e^{-y})_\infty}&\sim&\sqrt{\frac{y}{2\pi}}\exp{\left(\frac{\pi^2}{6y}\right)}\text{~~~~as~} y\rightarrow0^{+},\label{result_1}\\
		(-e^{-y};e^{-y})_\infty&\sim&\frac{1}{\sqrt2}\exp{\left(\frac{\pi^2}{12y}\right)}\text{~~~~as~} y\rightarrow0^{+}.\label{result_2}
	\end{eqnarray}
	Furthermore, we state the result of Ingham \cite{ingham}, which is integral to understanding the asymptotic behavior of the sum of minimal excludants.
	\begin{theorem}\label{ingham} 
		Let a power series $\mathcal{T}(q) = \sum\limits_{\nu=0}^{\infty}\mathcal{R}(\nu)q^{\nu}$ have radius
		of convergence 1, where the sequence $\{ \mathcal{R}(\nu)\}$ is a weakly increasing sequence of non-negative real numbers. Let $\rho,\tau \in \mathbb{R}$ and $\xi > 0$ be constants such that
		$$\mathcal{T}(e^{-y})\sim\rho y^{\tau}exp\left(\frac{\xi}{y}\right) \text{~~~as~} y\rightarrow0^{+},$$ 
		we have
		$$\mathcal{R}(\nu)\sim \frac{\rho \xi^{\frac{2\tau+1}{4}}}{2\sqrt{\pi} \nu^\frac{2\tau+3}{4}}~exp{\left(2\sqrt{\xi \nu}\right)} \text{~~~as~} \nu\rightarrow\infty.$$
	\end{theorem}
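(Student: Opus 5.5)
The statement is Ingham's Tauberian theorem, which the paper cites from \cite{ingham} and does not prove. Here is how I would prove it: a saddle-point heuristic fixes the exponent, and a monotone Tauberian argument upgrades it to an exact asymptotic. Write $N=\nu$ and let $y_N=\sqrt{\xi/N}$ be the point minimizing $\xi/y+Ny$, where the minimum value is $2\sqrt{\xi N}$. Since $\mathcal{R}$ is only assumed weakly increasing, I would not attack $\mathcal{R}(N)$ directly. Instead I would first work with the partial sums $A(N)=\sum_{m\le N}\mathcal{R}(m)$. Their generating function is $\mathcal{T}(q)/(1-q)$, which satisfies
\[
\sum_{m\ge0} A(m)e^{-my}\sim \rho\,y^{\tau-1}\exp\!\left(\frac{\xi}{y}\right)\quad (y\to0^+).
\]

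\textbf{Step 1 (crude bounds).} Monotonicity of $A$ gives
\[
A(N)e^{-Ny}/(1-e^{-y})\le \sum_{m\ge N}A(m)e^{-my}.
\]
Taking $y=y_N$ yields $A(N)\ll N^{c}\exp(2\sqrt{\xi N})$. A symmetric lower bound comes from $m\le N$. These bounds pin down the exponential order, and they show that the mass of $\sum A(m)e^{-my}$ is concentrated in a window $|m-N|\lesssim N^{3/4}$ around $N=\xi/y^2$. This window is the Gaussian width of $\exp(\xi/y+my)$ near the saddle.

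\textbf{Step 2 (sharp asymptotic for $A$).} This is the main obstacle. I would rescale $A(m)=\exp(2\sqrt{\xi m})\,m^{\beta}\,a(m)$, with $\beta$ chosen to match the predicted power. Expanding around the saddle then turns the hypothesis into a statement that a Gaussian-kernel average of $a$ over windows of width $N^{3/4}$ tends to a constant. To do this, I would compare the transform at $y$ and at nearby points $y(1+\delta)$, and use monotonicity of $A$ to bound the oscillation of $a$ inside such a window. That reduces the problem to a Wiener/Karamata-type Tauberian step for a convolution with a Gaussian kernel, whose Fourier transform never vanishes. This gives $A(N)\sim \rho\,\xi^{(2\tau-1)/4}N^{-(2\tau+1)/4}\exp(2\sqrt{\xi N})/(2\sqrt\pi)$, which is the claimed formula applied with $\tau$ replaced by $\tau-1$. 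Equivalently, I would run Ingham's original argument on the Laplace–Stieltjes transform $\int e^{-yt}\,dA(t)$.

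\textbf{Step 3 (differencing).} Set $h=\epsilon\sqrt{N/\xi}$, which is the scale on which $\exp(2\sqrt{\xi m})$ changes by the factor $e^{\epsilon}$. Monotonicity of $\mathcal{R}$ gives
\[
\frac{A(N)-A(N-h)}{h}\le \mathcal{R}(N)\le \frac{A(N+h)-A(N)}{h}.
\]
Inserting the asymptotic from Step 2, both sides become $(e^{\pm\epsilon}-1)/\epsilon\cdot\sqrt{\xi/N}\,A(N)(1+o(1))$. Letting $\epsilon\to0$ multiplies by $\sqrt{\xi/N}$, which yields exactly $\rho\,\xi^{(2\tau+1)/4}N^{-(2\tau+3)/4}\exp(2\sqrt{\xi N})/(2\sqrt\pi)$. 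I expect the real difficulty to lie entirely in Step 2, since Steps 1 and 3 are elementary consequences of monotonicity.
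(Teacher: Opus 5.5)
The paper does not prove this statement; it only quotes it from Ingham. Your Step 3 is fine: differencing at scale $h=\epsilon\sqrt{N/\xi}$ is the right way to pass from $A$ to $\mathcal{R}$, and your predicted asymptotic for $A$ is consistent. Step 2, however, has a gap that cannot be repaired.

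The transform on the real axis only yields Gaussian averages of $a$ over windows of width $\sigma\asymp N^{3/4}$. Monotonicity, by contrast, controls $a$ only on the much shorter scale $\sqrt{N/\xi}$, over which $e^{2\sqrt{\xi m}}$ changes by a bounded factor. Concretely, $A(m+h)\ge A(m)$ gives merely $a(m+h)\ge a(m)\exp(-h\sqrt{\xi/m}\,(1+o(1)))$. For $h\asymp N^{3/4}$ this permits a drop by a factor $e^{-cN^{1/4}}$, and it gives no upper control at all. A Wiener--Pitt Tauberian theorem for a kernel of width $\sigma$ requires $a$ to be slowly decreasing on the scale $\sigma$, so the deconvolution you describe is not available. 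Running Ingham's argument instead is not an equivalent alternative, because no argument that uses $\mathcal{T}(e^{-y})$ only for real $y$ can succeed.

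\textbf{The statement as written is false.} Take $\rho>0$ and let $f(m)$ be the right-hand side of the claimed asymptotic with $\nu=m$. Laplace's method gives $\sum_m f(m)e^{-my}\sim\rho y^{\tau}e^{\xi/y}$. Put $\mathcal{R}(m)=f(m)(1+\tfrac12\sin(m^{2/5}))$ for $m\ge M$ and $\mathcal{R}(m)=0$ for $m<M$.
\begin{itemize}
\item Since $\log\mathcal{R}(m+1)-\log\mathcal{R}(m)=\sqrt{\xi/m}+O(m^{-3/5})$, this sequence is nonnegative and weakly increasing once $M$ is large.
\item Near the saddle $m_0=\xi/y^2$, the factor $\sin(m^{2/5})$ oscillates with frequency $\asymp m_0^{-3/5}$, much larger than $1/\sigma\asymp m_0^{-3/4}$. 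One integration by parts shows that its contribution to $\mathcal{T}(e^{-y})$ is $O(m_0^{-3/20})$ relative to the main term.
\item Hence $\mathcal{T}(e^{-y})\sim\rho y^{\tau}e^{\xi/y}$, yet $\mathcal{R}(N)/f(N)=1+\tfrac12\sin(N^{2/5})$ does not tend to $1$.
\end{itemize}

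This defect in the commonly quoted form of Ingham's theorem was, to my recollection, pointed out by Bringmann, Jennings-Shaffer and Mahlburg. The missing hypothesis is a bound off the real axis: $\mathcal{T}(e^{-z})\ll|z|^{\tau}e^{\xi/|z|}$ as $z\to0$ in every cone $|\mathrm{Im}\,z|\le\Delta\,\mathrm{Re}\,z$. Evaluating at $z=y+i\theta$ with $|\theta|\le\Delta y$ detects oscillations of $a$ down to the scale $1/y\asymp\sqrt{N/\xi}$, which is exactly where monotonicity takes over. In the example above this bound fails at $\theta\approx\tfrac25 m_0^{-3/5}$. With the cone hypothesis, a complex-variable version of your Step 2 goes through. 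For the eta-quotients to which the paper applies the theorem, the cone bound follows from the modular transformation of $(q;q)_\infty$.
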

	\section{Minimal excludants in $k$-colored partitions}\label{sec3}
	In this section, we do an elaborate study of the minimal excludants in $k$-colored partitions. 
	\subsection{Generating functions for $\sigma mex_k(\nu),$ $\sigma moex_k(\nu),$ and $\sigma meex_k(\nu)$}
	\noindent In this subsection, we first focus on deriving generating functions for $\sigma mex_k(\nu),$ $\sigma moex_k(\nu),$ and $\sigma meex_k(\nu),$ which complement the results in \eqref{mex}, \eqref{moex}, and \eqref{meex}, respectively. 
	\begin{theorem}\label{Mex_k}
		For fixed $k\in\mathbb{N}$ and $\nu \in \mathbb{Z}_{\geq0},$ we have 
		\begin{eqnarray}
			\sum\limits_{\nu=0}^{\infty}\sigma{mex_{k}}(\nu)q^{\nu} = \dfrac{(-q^{k};q^{k})_\infty(q^{2k};q^{2k})_\infty}{(q;q)_{\infty}^k}.	\label{eqn_Mex_k}
		\end{eqnarray}
	\end{theorem}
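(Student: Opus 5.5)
The plan is to write $\sigma mex_k(\nu)$ as a sum of tail counts and then evaluate the resulting theta-type series with the Jacobi triple product. First I fix the convention, read off from the example: the numerical value of $mex_k(\Lambda)=\alpha_i$ is its size $\alpha$. Indeed, in the table the values $1_1,1_2,2_1$ occur $5,3,2$ times, and $5\cdot1+3\cdot1+2\cdot2=12$.

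Since the colored integers are ordered lexicographically, the smallest missing one has size $>m$ exactly when all $km$ colored integers $1_1,\dots,1_k,\dots,m_1,\dots,m_k$ appear in $\Lambda$. Writing $mex_k(\Lambda)=\sum_{m\ge0}\mathbf{1}[\text{size of }mex_k(\Lambda)>m]$ and summing over $\Lambda$ gives
\begin{equation*}
\sum_{\nu\ge0}\sigma mex_k(\nu)q^\nu=\sum_{m\ge0}\ \sum_{\Lambda\ni 1_1,\dots,m_k} q^{|\Lambda|}.
\end{equation*}

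For fixed $m$, I remove one copy of each required colored part $j_i$ with $1\le j\le m$, $1\le i\le k$. This is a bijection onto all $k$-colored partitions, and it lowers the weight by $k(1+2+\cdots+m)=km(m+1)/2$. Hence
\begin{equation*}
\sum_{\nu\ge0}\sigma mex_k(\nu)q^\nu=\frac{1}{(q;q)_\infty^k}\sum_{m\ge0}q^{km(m+1)/2}.
\end{equation*}

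It remains to evaluate the sum. The map $m\mapsto -m-1$ pairs the terms of the bilateral series, so $\sum_{m\ge0}q^{km(m+1)/2}=\frac12\sum_{m\in\mathbb{Z}}q^{km(m+1)/2}$. I then apply \eqref{JTP_1} with $z=1$ and $q$ replaced by $q^{1/2}$:
\begin{equation*}
\frac12\sum_{m\in\mathbb{Z}}q^{km(m+1)/2}=\frac12(-q^k;q^k)_\infty(-1;q^k)_\infty(q^k;q^k)_\infty.
\end{equation*}
Next I use $(-1;q^k)_\infty=2(-q^k;q^k)_\infty$. The remaining step is to check that $(-q^k;q^k)_\infty^2(q^k;q^k)_\infty=(-q^k;q^k)_\infty(q^{2k};q^{2k})_\infty$, which follows from $(-q^k;q^k)_\infty(q^k;q^k)_\infty=(q^{2k};q^{2k})_\infty$. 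Equivalently, one can use Gauss's identity $\sum_{m\ge0}q^{km(m+1)/2}=(q^{2k};q^{2k})_\infty/(q^k;q^{2k})_\infty$ together with Euler's $1/(q^k;q^{2k})_\infty=(-q^k;q^k)_\infty$. Either route yields \eqref{eqn_Mex_k}.

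The main obstacle is the interpretive step: deciding that $mex_k$ contributes its size, and that "size $>m$" means all $k$ colors of every size up to $m$ are present. Once this is fixed (and confirmed against the example), the rest is the standard Andrews–Newman argument followed by a routine product manipulation.
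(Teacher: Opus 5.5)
Your proof is correct, and it takes a genuinely different route from the paper's.

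The paper first computes the full distribution of $mex_k$. A partition has $mex_k(\Lambda)=m_j$ exactly when it contains every colored integer below $m_j$ and misses $m_j$, which gives the generating function $q^{k\binom{m}{2}+m(j-1)}(1-q^m)/(q;q)_\infty^k$. The paper then weights this by $m$ (via $\partial/\partial z_j$ at $z_j=1$) and telescopes over the color index $j$ to get $\sum_{m\ge1} m\,q^{k\binom{m}{2}}(1-q^{km})$. It folds this into the bilateral series $-\sum_{m\in\mathbb{Z}} m\,q^{km(m+1)/2}$ and evaluates it by differentiating \eqref{JTP_1} with respect to $z$ at $z=1$.

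Your tail-sum identity $mex_k(\Lambda)=\sum_{m\ge0}\mathbf{1}[\text{size}>m]$ is exactly the summation by parts $\sum_{m\ge1} m\bigl(q^{k\binom{m}{2}}-q^{k\binom{m+1}{2}}\bigr)=\sum_{m\ge0}q^{k\binom{m+1}{2}}$, carried out at the combinatorial level. So you never need the exact distribution, the telescoping over colors, or the derivative of the triple product. You only need the $z=1$ specialization of \eqref{JTP_1}, or Gauss plus Euler.

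Your argument is shorter and transfers almost verbatim to Theorem \ref{least_gap}: require $r$ copies of each colored integer, so the weight shift becomes $krm(m+1)/2$, i.e. $q\mapsto q^r$ in the theta sum. With the obvious adjustment it also handles $\sigma moex_k$ and $\sigma meex_k$, since odd values are $1+2(g-1)$ and even values are $2g$. What the paper's route buys is the refined count by the exact value of the excludant. That is what is needed for parity-restricted sums such as $\sigma_o mex_k$ in Theorem \ref{baruah_odd}, where a plain tail sum does not apply directly.
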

	\begin{proof}[\normalfont\textbf{Proof}]
		We fix $k\in\mathbb{N}$ and define $\mathcal{M}_k(z_1,z_2,z_3,\dots,z_k,q)$ as the series in which the coefficient of $z_\mathfrak{j}^\mathfrak{m}q^{\nu}$, denoted by $|\mathcal{P}_k(\mathfrak{m}_\mathfrak{j},\nu)|,$ represents the count of $k$-colored partitions $\Lambda$ of a positive integer $\nu$ with $mex_{k}(\Lambda)$ = $\mathfrak{m}_\mathfrak{j},$ where $1\leq \mathfrak{j}\leq k.$
		\begin{align*}
			&\mathcal{M}_k(z_1,z_2,z_3,\dots,z_k,q)  := \sum\limits_{\nu=0}^{\infty}\sum\limits_{\mathfrak{m}=1}^{\infty}\sum\limits_{\mathfrak{j}=1}^{k}|\mathcal{P}_k(\mathfrak{m}_\mathfrak{j},\nu)|z_\mathfrak{j}^\mathfrak{m}q^{\nu}\\
			&= \tiny{\!\!\sum\limits_{\mathfrak{m}=1}^{\infty}\!\!\left\{\!\!\sum\limits_{\mathfrak{j}=1}^{k}\!q^{\{\underbrace{(1+\cdots+1)}_{k~\textup{times}}+\underbrace{(2+\cdots+2)}_{k~\textup{times}}+\cdots+\underbrace{((\mathfrak{m}-1)+\cdots+(\mathfrak{m}-1))}_{k~\textup{times}}+\underbrace{(\mathfrak{m}+\cdots+\mathfrak{m})}_{(\mathfrak{j}-1)~\textup{times}}\}}\!\!\!\!\!(1-q^{\mathfrak{m}})z_\mathfrak{j}^\mathfrak{m}\!\!\right\}\!\!\!\prod\limits_{\nu=1}^{\infty}\!\!\dfrac{1}{(1-q^{\nu})^k}}\\
			&=\sum\limits_{\mathfrak{m}=1}^{\infty}\sum\limits_{\mathfrak{j}=1}^{k}q^{k(1+2+\cdots+(\mathfrak{m}-1))+\mathfrak{m}(\mathfrak{j}-1)}(1-q^{\mathfrak{m}})z_\mathfrak{j}^\mathfrak{m}\left(\prod\limits_{\nu=1}^{\infty}\dfrac{1}{(1-q^{\nu})^k}\right) \\
			&=\sum\limits_{\mathfrak{m}=1}^{\infty}\sum\limits_{\mathfrak{j}=1}^{k}\underbrace{q^{\mathfrak{m}\left(\frac{k(\mathfrak{m}-1)}{2}+(\mathfrak{j}-1)\right)}(1-q^{\mathfrak{m}})z_\mathfrak{j}^\mathfrak{m}\left(\prod\limits_{\nu=1}^{\infty}\dfrac{1}{(1-q^{\nu})^k}\right) }_{\textup{Part I}}. 
		\end{align*}
		Now,
		\begin{eqnarray}
			\sum\limits_{\nu=0}^{\infty}\sigma{mex_{k}}(\nu)q^{\nu} &:=& \sum\limits_{\mathfrak{m}=1}^{\infty}\sum\limits_{\mathfrak{j}=1}^{k}\left(\left.\dfrac{\partial}{\partial z_\mathfrak{j}}\right|_{z_\mathfrak{j}=1}\textup{Part I}\right)\nonumber\\
			&=&
			\sum\limits_{\mathfrak{m}=1}^{\infty}\sum\limits_{\mathfrak{j}=1}^{k}\left\{\left.\dfrac{\partial}{\partial z_\mathfrak{j}}\right|_{z_\mathfrak{j}=1} \left(q^{\mathfrak{m}\left(\frac{k(\mathfrak{m}-1)}{2}+(\mathfrak{j}-1)\right)}(1-q^{\mathfrak{m}})z_\mathfrak{j}^\mathfrak{m}\left(\prod\limits_{\nu=1}^{\infty}\dfrac{1}{(1-q^{\nu})^k}\right)\right)\right\}\nonumber\\
			&=&\sum\limits_{\mathfrak{m}=1}^{\infty}\sum\limits_{\mathfrak{j}=1}^{k}\mathfrak{m}q^{\mathfrak{m}\left(\frac{k(\mathfrak{m}-1)}{2}+(\mathfrak{j}-1)\right)}(1-q^{\mathfrak{m}})\left(\prod\limits_{\nu=1}^{\infty}\dfrac{1}{(1-q^{\nu})^k}\right)\nonumber \\
			&=&\sum\limits_{\mathfrak{m}=1}^{\infty}\mathfrak{m}q^{\left(\frac{\mathfrak{m}k(\mathfrak{m}-1)}{2}\right)}(1-q^{\mathfrak{m}})\left(\sum\limits_{\mathfrak{j}=1}^{k}q^{\mathfrak{m}(\mathfrak{j}-1)}\right)\left(\dfrac{1}{(q;q)^k_\infty}\right)\nonumber\\
			&=&\sum\limits_{\mathfrak{m}=1}^{\infty}\mathfrak{m}q^{\frac{k(\mathfrak{m}^2-\mathfrak{m})}{2}}(1-q^{k\mathfrak{m}})\left(\dfrac{1}{(q;q)^k_\infty}\right)\nonumber\\
			&=&\left(\sum\limits_{\mathfrak{m}=1}^{\infty}\mathfrak{m}q^{\frac{k(\mathfrak{m}^2-\mathfrak{m})}{2}}-\sum\limits_{\mathfrak{m}=1}^{\infty}\mathfrak{m}q^{\frac{k(\mathfrak{m}^2+\mathfrak{m})}{2}}\right)\left(\dfrac{1}{(q;q)^k_\infty}\right)\nonumber\\
			&=&\left(\sum\limits_{\mathfrak{m}=-\infty}^{-1}-\mathfrak{m}q^{\frac{k(\mathfrak{m}^2+\mathfrak{m})}{2}}-\sum\limits_{\mathfrak{m}=1}^{\infty}\mathfrak{m}q^{\frac{k(\mathfrak{m}^2+\mathfrak{m})}{2}}\right)\left(\dfrac{1}{(q;q)^k_\infty}\right)\nonumber\\
			&=&\sum\limits_{\mathfrak{m}=-\infty}^{\infty}\mathfrak{m}q^{\frac{k\mathfrak{m}(\mathfrak{m}+1)}{2}}\left(\dfrac{-1}{(q;q)^k_\infty}\right) \label{eqn_sigma_mex_k}. 
		\end{eqnarray}
		Now, partially differentiating both side of \eqref{JTP_1} with respect to $z,$ we obtain
		\begin{align*}
			&\sum\limits_{\nu=-\infty}^{\infty}\nu z^{\nu-1} q^{k\nu(\nu+1)}\\&=(-zq^{2k};q^{2k})_\infty\left(\frac{-1}{z};q^{2k}\right)_\infty(q^{2k};q^{2k})_\infty\left(\sum\limits_{\nu=1}^{\infty}\frac{q^{2k\nu}}{1+zq^{2k\nu}}-\sum\limits_{\nu=0}^{\infty}\frac{q^{2k\nu}}{(z+q^{2k\nu})z}\right).
		\end{align*}
		When $z=1,$ the preceding equation reduces to 
		\begin{eqnarray*}
			\sum\limits_{\nu=-\infty}^{\infty}\nu q^{k\nu(\nu+1)}&=&\left(\frac{-1}{2}\right)
			(-q^{2k};q^{2k})_\infty(-1;q^{2k})_\infty(q^{2k};q^{2k})_\infty \\
			&=&-(q^{4k};q^{4k})_\infty(-q^{2k};q^{2k})_\infty. \label{eqn_kn(n+1)}
		\end{eqnarray*}
		Replacing $q$ with $q^{1/2}$ in the previous equation, we obtain
		\begin{eqnarray}\label{eqn_kn(n+1)/2}
			\sum\limits_{\nu=-\infty}^{\infty}\nu q^{\frac{k\nu(\nu+1)}{2}}=-(q^{2k};q^{2k})_\infty(-q^{k};q^{k})_\infty.
		\end{eqnarray}
		Using \eqref{eqn_sigma_mex_k} and \eqref{eqn_kn(n+1)/2}, we get
		\begin{align*}
			\sum\limits_{\nu=0}^{\infty}\sigma{mex_{k}}(\nu)q^{\nu}=\dfrac{(-q^{k};q^{k})_\infty(q^{2k};q^{2k})_\infty}{(q;q)^k_\infty}.
		\end{align*}
		This concludes the proof.
	\end{proof}
	\noindent Next, we derive the generating function that captures the sum of minimal odd excludants across all the $k$-colored partitions of a positive integer $\nu$.
	\begin{theorem}\label{Moex_k}
		For fixed $k\in\mathbb{N}$ and $\nu \in \mathbb{Z}_{\geq0},$ we have
		$$ \sum\limits_{\nu=0}^{\infty}\sigma{moex_{k}}(\nu)q^{\nu} = \dfrac{(q^{2k};q^{2k})_\infty(-q^{k};q^{2k})_{\infty}^2}{(q;q)_{\infty}^k}.	$$
	\end{theorem}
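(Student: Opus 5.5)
The plan is to follow the proof of Theorem~\ref{Mex_k}, changing only the combinatorial bookkeeping to odd sizes, and then to close with \eqref{JTP_2} rather than a differentiated triple product.

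\emph{Step 1: the condition $moex_k(\Lambda)=(2\mathfrak{m}+1)_\mathfrak{j}$.} Fix $\mathfrak{m}\ge 0$ and $1\le \mathfrak{j}\le k$. By the lexicographic order on colored integers, $moex_k(\Lambda)=(2\mathfrak{m}+1)_\mathfrak{j}$ holds exactly when three conditions hold:
\begin{itemize}
\item every colored part $(2i-1)_c$ with $1\le i\le \mathfrak{m}$ and $1\le c\le k$ occurs in $\Lambda$;
\item the parts $(2\mathfrak{m}+1)_1,\dots,(2\mathfrak{m}+1)_{\mathfrak{j}-1}$ occur in $\Lambda$;
\item the part $(2\mathfrak{m}+1)_\mathfrak{j}$ does not occur in $\Lambda$.
\end{itemize}
The even-sized parts are unrestricted.

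\emph{Step 2: the generating function for fixed $(\mathfrak{m},\mathfrak{j})$.} Remove one copy of each forced part. Their total size is
\[
k\bigl(1+3+\cdots+(2\mathfrak{m}-1)\bigr)+(\mathfrak{j}-1)(2\mathfrak{m}+1)=k\mathfrak{m}^2+(\mathfrak{j}-1)(2\mathfrak{m}+1).
\]
What remains is an arbitrary $k$-colored partition in which color $\mathfrak{j}$ of size $2\mathfrak{m}+1$ is forbidden. This gives
\[
q^{k\mathfrak{m}^2+(\mathfrak{j}-1)(2\mathfrak{m}+1)}\,(1-q^{2\mathfrak{m}+1})\,\frac{1}{(q;q)_\infty^k}.
\]

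\emph{Step 3: weight by the size and sum over colors.} Marking the excludant by $z_\mathfrak{j}^{2\mathfrak{m}+1}$ and differentiating at $z_\mathfrak{j}=1$, as in Theorem~\ref{Mex_k}, weights each term by $2\mathfrak{m}+1$. The sum over $\mathfrak{j}$ is geometric: $(1-q^{2\mathfrak{m}+1})\sum_{\mathfrak{j}=1}^k q^{(\mathfrak{j}-1)(2\mathfrak{m}+1)}=1-q^{k(2\mathfrak{m}+1)}$. Hence
\[
\sum_{\nu\ge0}\sigma moex_k(\nu)q^\nu=\frac{1}{(q;q)_\infty^k}\sum_{\mathfrak{m}\ge0}(2\mathfrak{m}+1)\Bigl(q^{k\mathfrak{m}^2}-q^{k(\mathfrak{m}+1)^2}\Bigr),
\]
using $k\mathfrak{m}^2+k(2\mathfrak{m}+1)=k(\mathfrak{m}+1)^2$.

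\emph{Step 4: telescope and apply the triple product.} The key (mild) step is the telescoping. Shift the index in the second sum, $\mathfrak{m}+1\mapsto\mathfrak{m}$. The coefficient of $q^{k\mathfrak{m}^2}$ for $\mathfrak{m}\ge1$ becomes $(2\mathfrak{m}+1)-(2\mathfrak{m}-1)=2$, and the $\mathfrak{m}=0$ term contributes $1$. The series therefore equals $1+2\sum_{\mathfrak{m}\ge1}q^{k\mathfrak{m}^2}=\sum_{\nu\in\mathbb{Z}}q^{k\nu^2}$. By \eqref{JTP_2} this is $(-q^k;q^{2k})_\infty^2(q^{2k};q^{2k})_\infty$, which gives the stated identity.

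No step seems a genuine obstacle. The only care needed is the exponent bookkeeping in Step 2 and confirming that the telescoping in Step 4 yields exactly the theta series.
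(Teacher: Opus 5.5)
Your proposal is correct and follows the paper's proof essentially step for step: the paper indexes the excludant as $(2g-1)_\mathfrak{j}$ with $g\ge 1$ (your $\mathfrak{m}=g-1$), and it uses the same geometric sum over colors, the same telescoping to $1+2\sum_{g\ge1}q^{kg^2}$, and the same final appeal to \eqref{JTP_2}. Your explicit remove-the-forced-parts bijection in Step 2 justifies the factor $(1-q^{2\mathfrak{m}+1})/(q;q)_\infty^k$, which the paper only states without argument.
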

	\begin{proof}[\normalfont\textbf{Proof}]
		We fix $k\in\mathbb{N}$ and define $\mathcal{M}^{odd}_k(z_1,z_2,z_3,\dots,z_k,q)$ as the series in which the coefficient of $z_\mathfrak{j}^{(2g-1)}q^{\nu},$ denoted by $|\mathcal{P}^{odd}_k((2g-1)_\mathfrak{j},\nu)|,$ represents the count of $k$-colored partitions $\Lambda$ of a positive integer $\nu$ with $moex_{k}(\Lambda)$=$(2g-1)_\mathfrak{j}$ for some $g\in\mathbb{N},$ where $1\leq \mathfrak{j}\leq k.$
		\begin{align*}
			&\mathcal{M}^{odd}_k(z_1,z_2,z_3,\dots,z_k,q)  := \sum\limits_{\nu=0}^{\infty}\sum\limits_{g=1}^{\infty}\sum\limits_{\mathfrak{j}=1}^{k}|\mathcal{P}^{odd}_k((2g-1)_\mathfrak{j},\nu)|z_\mathfrak{j}^{(2g-1)}q^{\nu}\\
			&= \tiny{\sum\limits_{g=1}^{\infty}\sum\limits_{\mathfrak{j}=1}^{k}q^{\{\underbrace{(1+\cdots+1)}_{k~\textup{times}}+\underbrace{(3+\cdots+3)}_{k~\textup{times}}+\cdots+\underbrace{((2g-3)+\cdots+(2g-3))}_{k~\textup{times}}+\underbrace{((2g-1)+\cdots+(2g-1))}_{(\mathfrak{j}-1)~\textup{times}}\}}}\\
		\end{align*}
		\vspace{-1.5cm}
		\begin{align*}
			&~~~~~~~~~~~~~~~~~~~~~~~~~~~~~~~~~~~~~~~~~~~~~~~~~~~~~~~~\times\left(1-q^{(2g-1)}\right)z_\mathfrak{j}^{(2g-1)}{\left(\prod\limits_{\nu=1}^{\infty}\dfrac{1}{(1-q^{\nu})^k}\right)}\\
			&=\sum\limits_{g=1}^{\infty}\sum\limits_{\mathfrak{j}=1}^{k}q^{k(1+3+\cdots+(2g-3))+(2g-1)(\mathfrak{j}-1)}\left(1-q^{(2g-1)}\right)z_\mathfrak{j}^{(2g-1)}
			\left(\prod\limits_{\nu=1}^{\infty}\dfrac{1}{(1-q^{\nu})^k}\right) \\
			&=\sum\limits_{g=1}^{\infty}\sum\limits_{\mathfrak{j}=1}^{k}\underbrace{q^{k(g-1)^2+(2g-1)(\mathfrak{j}-1)}\left(1-q^{(2g-1)}\right)z_\mathfrak{j}^{(2g-1)}\left(\prod\limits_{\nu=1}^{\infty}\dfrac{1}{(1-q^{\nu})^k}\right) }_{\textup{Part I}}. 
		\end{align*}
		Now,
		\begin{eqnarray}
			&&\sum\limits_{\nu=0}^{\infty}\sigma{moex_{k}}(\nu)q^{\nu} := \sum\limits_{g=1}^{\infty}\sum\limits_{\mathfrak{j}=1}^{k}\left(\left.\dfrac{\partial}{\partial z_\mathfrak{j}}\right|_{z_\mathfrak{j}=1}\textup{Part I}\right)\nonumber\\
			&&=
			\sum\limits_{g=1}^{\infty}\sum\limits_{\mathfrak{j}=1}^{k}\left\{\left.\dfrac{\partial}{\partial z_\mathfrak{j}}\right|_{z_\mathfrak{j}=1} \left(q^{k(g-1)^2+(2g-1)(\mathfrak{j}-1)}\left(1-q^{(2g-1)}\right)z_\mathfrak{j}^{(2g-1)}\left(\prod\limits_{\nu=1}^{\infty}\dfrac{1}{(1-q^{\nu})^k}\right)  \right)\right\}\nonumber\\
			&&=\sum\limits_{g=1}^{\infty}\sum\limits_{\mathfrak{j}=1}^{k}(2g-1) q^{k(g-1)^2+(2g-1)(\mathfrak{j}-1)}\left(1-q^{(2g-1)}\right)\left(\prod\limits_{\nu=1}^{\infty}\dfrac{1}{(1-q^{\nu})^k}\right) \nonumber\\
			&&=\sum\limits_{g=1}^{\infty}(2g-1) q^{k(g-1)^2}\left(1-q^{(2g-1)}\right)\left(\sum\limits_{\mathfrak{j}=1}^{k}q^{(2g-1)(\mathfrak{j}-1)}\right)\left(\dfrac{1}{(q;q)^k_\infty}\right) \nonumber\\
			&&=\sum\limits_{g=1}^{\infty} (2g-1) q^{k(g-1)^2}\left(1-q^{k(2g-1)}\right)\left(\dfrac{1}{(q;q)^k_\infty}\right)\nonumber\\
			&&=\left(\sum\limits_{g=1}^{\infty}(2g-1) q^{k(g-1)^2}-\sum\limits_{g=1}^{\infty}(2g-1) q^{kg^2}\right)\left(\dfrac{1}{(q;q)^k_\infty}\right)\nonumber\\
			&&=\left(\sum\limits_{g=0}^{\infty}(2g+1) q^{kg^2}-\sum\limits_{g=1}^{\infty}(2g-1) q^{kg^2}\right)\left(\dfrac{1}{(q;q)^k_\infty}\right)\nonumber\\
			&&=\left(1+\sum\limits_{g=1}^{\infty}(2g+1) q^{kg^2}-\sum\limits_{g=1}^{\infty}(2g-1) q^{kg^2}\right)\left(\dfrac{1}{(q;q)^k_\infty}\right)\nonumber\\
			&&=\left(1+2\sum\limits_{g=1}^{\infty} q^{kg^2}\right)\left(\dfrac{1}{(q;q)^k_\infty}\right)
			=\left(1+\sum\limits_{g=-\infty}^{-1} q^{kg^2}+\sum\limits_{g=1}^{\infty} q^{kg^2}\right)\left(\dfrac{1}{(q;q)^k_\infty}\right)\nonumber\\
			&&=\left(\sum\limits_{g=-\infty}^{\infty} q^{kg^2}\right)\left(\dfrac{1}{(q;q)^k_\infty}\right).\label{sigma_moex_k}
		\end{eqnarray}
		Using \eqref{JTP_2} and \eqref{sigma_moex_k}, we obtain
		\begin{align*}
			\sum\limits_{\nu=0}^{\infty}\sigma{moex_{k}}(\nu)q^{\nu}&=\dfrac{(q^{2k};q^{2k})_\infty(-q^k;q^{2k})^2_\infty}{(q;q)^k_\infty}.
		\end{align*}
		This concludes the proof.
	\end{proof}
	\noindent The generating function that captures the sum of minimal even excludants across all $k$-colored partitions of a positive integer $\nu$ can be obtained in a similar manner as that for the sum of minimal odd excludants. Therefore, we provide only the result and omit the proof.
	\begin{theorem}\label{Meex_k}
		For fixed $k\in\mathbb{N}$ and $\nu \in \mathbb{Z}_{\geq0},$ we have
		$$ \sum\limits_{\nu=0}^{\infty}\sigma{meex_{k}}(\nu)q^{\nu} = \dfrac{2(-q^{2k};q^{2k})_\infty(q^{4k};q^{4k})_\infty}{(q;q)_{\infty}^k}.	$$
	\end{theorem}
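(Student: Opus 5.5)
The plan is to follow the proof of Theorem \ref{Moex_k} closely, replacing odd sizes by even sizes and finishing with \eqref{JTP_1} at $z=1$ in place of \eqref{JTP_2}. Fix $k$ and let $|\mathcal{P}^{even}_k((2g)_\mathfrak{j},\nu)|$ be the number of $k$-colored partitions $\Lambda$ of $\nu$ with $meex_k(\Lambda)=(2g)_\mathfrak{j}$, where $g\in\mathbb{N}$ and $1\le \mathfrak{j}\le k$. As in the paper, the excludant is weighted by its size $2g$.

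\textbf{Step 1: count partitions with a given excludant.} For $meex_k(\Lambda)=(2g)_\mathfrak{j}$, the partition must contain each of the colored parts $2_i,4_i,\dots,(2g-2)_i$ for every $1\le i\le k$. It must also contain $(2g)_1,\dots,(2g)_{\mathfrak{j}-1}$, and it must not contain $(2g)_\mathfrak{j}$. All other parts are unrestricted. The forced parts contribute
\[
q^{k(2+4+\cdots+(2g-2))+2g(\mathfrak{j}-1)}=q^{kg(g-1)+2g(\mathfrak{j}-1)}.
\]
The exclusion of $(2g)_\mathfrak{j}$ contributes a factor $(1-q^{2g})$, and the free parts contribute $1/(q;q)_\infty^k$. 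Marking the excludant by $z_\mathfrak{j}^{2g}$ and applying $\partial/\partial z_\mathfrak{j}$ at $z_\mathfrak{j}=1$ gives
\[
\sum_{\nu\ge0}\sigma meex_k(\nu)q^\nu=\frac{1}{(q;q)_\infty^k}\sum_{g\ge1}2g\,q^{kg(g-1)}(1-q^{2g})\sum_{\mathfrak{j}=1}^k q^{2g(\mathfrak{j}-1)} .
\]
The inner geometric sum telescopes against $(1-q^{2g})$ to give $(1-q^{2kg})$.

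\textbf{Step 2: reduce to a theta series.} The previous step leaves
\[
\sum_{g\ge1}2g\,q^{kg(g-1)}-\sum_{g\ge1}2g\,q^{kg(g+1)}.
\]
Shift $g\mapsto g+1$ in the first sum, which turns it into $\sum_{g\ge0}2(g+1)q^{kg(g+1)}$. Subtracting the second sum term by term leaves $2\sum_{g\ge0}q^{kg(g+1)}$. The exponent $g(g+1)$ is invariant under $g\mapsto -1-g$, so this equals $\sum_{g=-\infty}^{\infty}q^{kg(g+1)}$.

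\textbf{Step 3: product form via the triple product.} Set $z=1$ in \eqref{JTP_1} and use $(-1;q^{2k})_\infty=2(-q^{2k};q^{2k})_\infty$. This gives
\[
\sum_{g=-\infty}^{\infty}q^{kg(g+1)}=2(-q^{2k};q^{2k})_\infty^2(q^{2k};q^{2k})_\infty .
\]
Then $(q^{2k};q^{2k})_\infty(-q^{2k};q^{2k})_\infty=(q^{4k};q^{4k})_\infty$ produces $2(-q^{2k};q^{2k})_\infty(q^{4k};q^{4k})_\infty$, which is the claimed numerator.

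\textbf{Main obstacle.} The only delicate step is Step 1: identifying exactly which colored parts are forced, given the lexicographic order on colored integers. Colored integers of odd size play no role in the minimal even excludant, so they are simply absorbed into the free factor $1/(q;q)_\infty^k$. The remaining steps are routine bookkeeping. As a sanity check, the constant term is $2$, which correctly records $meex_k(\emptyset)=2_1$.
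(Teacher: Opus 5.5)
Your proposal is correct and follows the argument the paper intends. The paper omits this proof, saying only that it parallels Theorem \ref{Moex_k}, and your Steps 1--3 carry out that parallel: the forced parts $2_i,\dots,(2g-2)_i$ for all colors, telescoping to $1-q^{2kg}$, reduction to $\sum_{g=-\infty}^{\infty}q^{kg(g+1)}$, and then \eqref{JTP_1} at $z=1$ with $(-1;q^{2k})_\infty=2(-q^{2k};q^{2k})_\infty$ in place of \eqref{JTP_2}.
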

	\subsection{Asymptotic behavior of $\sigma mex_k(\nu),$ $\sigma moex_k(\nu),$ and $\sigma meex_k(\nu)$}
	\noindent	Next, we study the asymptotic behavior of $\sigma mex_k(\nu),$ $\sigma moex_k(\nu),$ and $\sigma meex_k(\nu)$. 
	\begin{theorem}\label{Asymp_mex_k}
		For fixed $k\in\mathbb{N},$ we have
		$$\sigma{mex_{k}(\nu)}\sim \frac{k^{\frac{k-2}{4}}}{2^{\frac{k+3}{2}}\cdot6^{\frac{k}{4}}\cdot \nu^{\frac{k+2}{4}}}\exp\!\left( 2\pi \sqrt{\frac{k\nu}{6}} \right)~~~~ \text{as} ~\nu \to \infty.$$
		
	\end{theorem}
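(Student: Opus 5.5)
The plan is to apply Ingham's Tauberian theorem (Theorem \ref{ingham}) to the generating function of Theorem \ref{Mex_k}. I will first determine the behavior of $\mathcal{T}(q):=\sum_{\nu\ge0}\sigma mex_k(\nu)q^\nu$ at $q=e^{-y}$ as $y\to0^+$, and then check the Tauberian hypotheses.

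\emph{Asymptotics of $\mathcal{T}(e^{-y})$.} Put $q=e^{-y}$ in \eqref{eqn_Mex_k} and treat the three factors separately.
\begin{itemize}
\item By \eqref{result_2} with $y\mapsto ky$, we get $(-e^{-ky};e^{-ky})_\infty\sim 2^{-1/2}\exp(\pi^2/(12ky))$.
\item By \eqref{result_1} with $y\mapsto 2ky$, inverted, we get $(e^{-2ky};e^{-2ky})_\infty\sim\sqrt{\pi/(ky)}\,\exp(-\pi^2/(12ky))$.
\item By the $k$-th power of \eqref{result_1}, we get $(e^{-y};e^{-y})_\infty^{-k}\sim (y/2\pi)^{k/2}\exp(k\pi^2/(6y))$.
\end{itemize}
The exponentials in the first two factors cancel. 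Multiplying the three gives $\mathcal{T}(e^{-y})\sim\rho\,y^{\tau}\exp(\xi/y)$ with
\[
\rho=\frac{1}{\sqrt2}\sqrt{\frac{\pi}{k}}\,(2\pi)^{-k/2},\qquad \tau=\frac{k-1}{2},\qquad \xi=\frac{k\pi^2}{6}.
\]

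\emph{Hypotheses of Theorem \ref{ingham}.} The radius of convergence is $1$, since the product has its singularities on $|q|=1$ and the coefficients grow subexponentially. The real issue is that $\sigma mex_k(\nu)$ must be weakly increasing, and I expect this to be the main obstacle. I would handle it at the level of the product. Using $(-q^k;q^k)_\infty=1/(q^k;q^{2k})_\infty$, we have
\[
\mathcal{T}(q)=\frac{(q^{2k};q^{2k})_\infty}{(q^k;q^{2k})_\infty\,(q;q)_\infty^{k}}.
\]
Each factor $1-q^{2kn}$ in the numerator cancels against one copy of $1-q^{2kn}$ in $(q;q)_\infty^k$. What remains is $\mathcal{T}(q)=\prod_{j\in S}(1-q^j)^{-1}$ for a multiset $S$ of positive integers. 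Since $2k\ge2$, the factor $1-q$ is never cancelled, so $1\in S$. Hence $\mathcal{T}(q)=\frac{1}{1-q}H(q)$, where $H(q)=\prod_{j\in S\setminus\{1\}}(1-q^j)^{-1}$ has nonnegative coefficients. Therefore $\sigma mex_k(\nu)=\sum_{i\le\nu}[q^i]H$ is weakly increasing.

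\emph{Applying Theorem \ref{ingham}.} Substituting $\rho,\tau,\xi$ gives $2\sqrt{\xi\nu}=2\pi\sqrt{k\nu/6}$, together with $(2\tau+1)/4=k/4$ and $(2\tau+3)/4=(k+2)/4$. For the constant,
\[
\rho\,\xi^{k/4}=\pi^{1/2}\,k^{(k-2)/4}\,2^{-(k+1)/2}\,6^{-k/4}.
\]
Dividing by $2\sqrt\pi$ gives $k^{(k-2)/4}/(2^{(k+3)/2}6^{k/4})$. This is exactly the constant in Theorem \ref{Asymp_mex_k}, which completes the argument.
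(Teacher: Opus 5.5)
Your argument is correct, and its asymptotic half is exactly the paper's computation: you evaluate the three factors of \eqref{eqn_Mex_k} at $q=e^{-y}$ via \eqref{result_1} and \eqref{result_2}, then feed $\rho=2^{-(k+1)/2}k^{-1/2}\pi^{-(k-1)/2}$, $\tau=\frac{k-1}{2}$, $\xi=\frac{k\pi^2}{6}$ into Theorem~\ref{ingham}.

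The genuine difference is how you verify the monotonicity hypothesis. The paper proves Lemma~\ref{lemma_1} combinatorially, using an injection $\Gamma_1:\mathcal{P}_k(\nu)\to\mathcal{P}_k(\nu+1)$. This map appends $1_1$ when $1_1,\dots,1_k$ are all parts, and otherwise enlarges the largest part by one. You instead cancel $(q^{2k};q^{2k})_\infty$ against $(q;q)_\infty^k$ and isolate an uncancelled factor $(1-q)^{-1}$, so that $(1-q)\mathcal{T}(q)$ has nonnegative coefficients.

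Your route is shorter and needs no case analysis. It also carries over verbatim to every other series for which the paper invokes monotonicity. The moex, meex, least $r$-gap and $\{2,1\}$ cubic generating functions all have the form $(1-q)^{-1}$ times a series with nonnegative coefficients, whereas the paper simply asserts these follow ``in a similar fashion''.

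The combinatorial injection has the merit of not depending on the closed form of Theorem~\ref{Mex_k}. As written, however, it is slightly inaccurate. The claim that $\Gamma_1$ leaves the minimal excludant unchanged fails when the largest part has size $1$: for $k=2$, $\Gamma_1(1_1)=2_1$ moves the mex from $1_2$ to $1_1$. The inequality $\sigma mex_k(\nu)\le\sigma mex_k(\nu+1)$ survives only because $\sigma mex_k$ records just the size of the mex, and that size stays equal to $1$. Your analytic argument avoids this issue entirely.
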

	\begin{remark}
		Note that when the value $k=1$ is substituted into Theorem $\ref{Asymp_mex_k},$ the resulting expression coincides with the result established by Grabner and Knopfmacher in \textnormal{\cite{grabner}} as given in equation \eqref{asymp_mex}.
	\end{remark}
	\noindent To establish our result, we begin by recalling the definition of a weakly increasing sequence. A sequence $\{a(\nu)\}$ is called weakly increasing if it satisfies $a_\nu\leq a_{\nu+1}$ for every $\nu\in \mathbb{N}$. We now proceed to prove a preliminary lemma.
	\begin{lemma}\label{lemma_1}
		For a fixed $k\in\mathbb{N},$ the sequences $\{\sigma mex_k(\nu)\},$ $\{\sigma moex_k(\nu)\},$ and $\{\sigma meex_k(\nu)\}$ are all weakly increasing.
	\end{lemma}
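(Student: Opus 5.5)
Using Theorems \ref{Mex_k}, \ref{Moex_k} and \ref{Meex_k}, I will show that $(1-q)$ times each generating function has non-negative coefficients. That is equivalent to weak monotonicity of the coefficient sequence.

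\textbf{Step 1: factor out $1/(1-q)$.} Write $\frac{1}{(q;q)_\infty^k}=\frac{1}{1-q}\cdot\frac{1}{(1-q)^{k-1}(q^2;q)_\infty^{k}}$. Multiplying by $(1-q)$, each generating function becomes a product of two factors:
\begin{itemize}
\item[(a)] $\frac{1}{(1-q)^{k-1}(q^2;q)_\infty^k}$, which has non-negative coefficients (it is a product of geometric series);
\item[(b)] the theta-type numerator, namely $(-q^{k};q^{k})_\infty(q^{2k};q^{2k})_\infty$, or $(q^{2k};q^{2k})_\infty(-q^k;q^{2k})^2_\infty$, or $2(-q^{2k};q^{2k})_\infty(q^{4k};q^{4k})_\infty$.
\end{itemize}

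\textbf{Step 2: the numerators have non-negative coefficients.} This follows from the series forms obtained in the proofs, not from the product forms, because the factors $(q^{2k};q^{2k})_\infty$ have signs.
\begin{itemize}
\item For moex, \eqref{sigma_moex_k} gives the numerator as $\sum_{g\in\mathbb{Z}} q^{kg^2}$, which is clearly non-negative.
\item For mex, \eqref{eqn_sigma_mex_k} gives $\sum_{\mathfrak m\ge1}\mathfrak m\,q^{k\mathfrak m(\mathfrak m-1)/2}(1-q^{k\mathfrak m})$. Telescoping yields $\sum_{\mathfrak m\ge0} q^{k\mathfrak m(\mathfrak m+1)/2}$, since the coefficient of $q^{k\mathfrak m(\mathfrak m+1)/2}$ is $(\mathfrak m+1)-\mathfrak m=1$. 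This is again non-negative. Equivalently, one can use Gauss's identity $(q^{2};q^{2})_\infty(-q;q)_\infty=\sum_{\mathfrak m\ge0}q^{\mathfrak m(\mathfrak m+1)/2}$ with $q\mapsto q^k$.
\item For meex, the analogous computation (the proof of Theorem \ref{Meex_k}, parallel to the moex case) gives $2\sum_{g\ge0}q^{kg(g+1)}$, which is non-negative. This also follows from Gauss's identity with $q\mapsto q^{2k}$.
\end{itemize}

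\textbf{Step 3: conclude.} A product of two series with non-negative coefficients has non-negative coefficients. Hence $\sigma mex_k(\nu)-\sigma mex_k(\nu-1)\ge0$ for all $\nu\ge1$, and the same holds for moex and meex.

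The main obstacle is Step 2. The closed product forms are not obviously non-negative, so one must return to the single-sum (Gauss/Jacobi) representations. Checking the telescoping for meex needs care, because its series form was not written out explicitly in the paper.

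An alternative fallback is a combinatorial injection. Adding a part $1_1$ maps $k$-colored partitions of $\nu$ into those of $\nu+1$ but can lower $mex_k$, so that route seems harder; I would use the generating-function argument above.
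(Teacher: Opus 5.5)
Your proof is correct, but it takes a different route from the paper. The paper argues combinatorially. It builds an injection $\Gamma_1:\mathcal{P}_k(\nu)\to\mathcal{P}_k(\nu+1)$ that adjoins a part $1_1$ when all of $1_1,\dots,1_k$ already occur, and otherwise increases the size of the largest part by $1$. In either case the size of the minimal excludant is preserved; in the second case the color can change (e.g.\ $1_1\mapsto 2_1$ for $k=2$), but the size stays $1$. The moex and meex cases are declared analogous.

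You instead show that $(1-q)$ times each generating function has non-negative coefficients. You split off $\frac{1-q}{(q;q)_\infty^k}=\frac{1}{(1-q)^{k-1}(q^2;q)_\infty^k}$ and rewrite the numerators as the positive theta series $\sum_{m\ge0}q^{km(m+1)/2}$, $\sum_{g\in\mathbb{Z}}q^{kg^2}$ and $2\sum_{g\ge0}q^{kg(g+1)}$. Your telescoping and your use of Gauss's identity are both right.

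What your route buys is sharper information, namely $\sigma mex_k(\nu)=\sum_{m\ge0}|\mathcal{P}_k(\nu-km(m+1)/2)|$ and its analogues. It also transfers directly to the $r$-gap and $\{l,s\}$ settings, whose numerators are again positive theta series. Its cost is dependence on Theorems \ref{Mex_k}--\ref{Meex_k}, one of which the paper states without proof; you sensibly re-derive it via Gauss. The paper's injection, by contrast, is self-contained.

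Your dismissal of the combinatorial fallback rests on a slip. Adjoining a part $1_1$ can never lower $mex_k$: it only enlarges the set of colored parts present, so the smallest missing colored integer, and hence its size, can only move up. So the map that adjoins one part $1_1$ is already an injection $\mathcal{P}_k(\nu)\to\mathcal{P}_k(\nu+1)$. It does not decrease $mex_k$ or $moex_k$ and leaves $meex_k$ unchanged, which proves the lemma at once and more simply than the paper's $\Gamma_1$.
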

	\begin{proof}[\normalfont\textbf{Proof}]
		To prove this lemma, we fix $k\in\mathbb{N}$  and define a mapping
		$$\Gamma_1:\mathcal{P}_k(\nu)\rightarrow \mathcal{P}_k(\nu+1),$$
		and for each $\Lambda\in \mathcal{P}_k(\nu),$ we proceed as follows:
		\begin{itemize}
			\item [i.] If $mex_k(\Lambda)\neq1_\mathfrak{i}$ for some $1\leq i\leq k,$ then $\Gamma_1(\Lambda)$ is the partition of $\nu+1$ formed by adding $1_1$ as an additional part to $\Lambda.$
			\item [ii.] If $mex_k(\Lambda)=1_\mathfrak{i}$ for some $1\leq i\leq k,$
			then $\Gamma_1(\Lambda)$ is the partition of $\nu+1$ formed by increasing the largest part size of $\Lambda$ by $1$.
		\end{itemize}
		Clearly, $\Gamma_1$ is an injective map. Moreover, under the map $\Gamma_1,$ the minimal excludant remains unchanged. Hence, 
		$$\sigma mex_k(\nu)\leq\sigma mex_k(\nu+1).$$
		Therefore, $\{\sigma mex_k(\nu)\}$ is weakly increasing sequence.
		In a similar fashion, one can show that $\{\sigma moex_k(\nu)\}$ and $\{\sigma meex_k(\nu)\}$ are weakly increasing. Hence, we skip the details.
	\end{proof}
	\begin{proof}[\normalfont\textbf{Proof of Theorem \ref{Asymp_mex_k}}]
		For any $k\in \mathbb{N}$ 
		\begin{eqnarray}
			\mathcal{A}_k(q) &=&  \sum\limits_{\nu=0}^{\infty}\sigma{mex_{k}}(\nu)q^{\nu}= \dfrac{(q^{2k};q^{2k})_\infty(-q^{k};q^{k})_\infty}{(q;q)_{\infty}^k} \label{A_k(q)}.
		\end{eqnarray}
		Also, $\{\sigma{mex_{k}}(\nu)\}$ is a weakly increasing sequence.\\
		Upon replacing $q$ with $e^{-y}$ in the preceding equation \eqref{A_k(q)} and using \eqref{result_1} and \eqref{result_2}, we obtain
		\begin{align*}
			\mathcal{A}_k(e^{-y})&=\frac{(e^{-2ky};e^{-2ky})_\infty(-e^{-ky};e^{-ky})_\infty}{(e^{-y};e^{-y})_\infty^k}
			\sim\frac{\frac{1}{\sqrt2}e^{\frac{\pi^2}{12ky}}\left(\sqrt{\frac{y}{2\pi}}e^{\frac{\pi^2}{6y}}\right)^k}{\sqrt{\frac{ky}{\pi}}e^{\frac{\pi^2}{12ky}}}\\
			&=\frac{y^{\frac{k-1}{2}}e^{\frac{k\pi^2}{6y}}}{2^{\frac{k+1}{2}}\sqrt{k}~\pi^{\frac{k-1}{2}}} \text{~~~as~} y\rightarrow0^{+}.
		\end{align*}
		Using Theorem \ref{ingham} with $\rho=\frac{1}{2^{\frac{k+1}{2}}\sqrt{k}~\pi^{\frac{k-1}{2}}},$ $\tau=\frac{k-1}{2},$ and $\xi=\frac{k\pi^2}{6},$  we get 
		\begin{align*}
			\sigma{mex_{k}}(\nu)&\sim \frac{\frac{1}{2^{\frac{k+1}{2}}\sqrt{k}~\pi^{\frac{k-1}{2}}}\left(\frac{k\pi^2}{6}\right)^{\frac{k}{4}}}{2\sqrt{\pi} \nu^\frac{k+2}{4}}\exp\!\left(2\sqrt{\frac{k\pi^2\nu}{6}}\right)
			= \frac{k^{\frac{k-2}{4}}}{2^{\frac{k+3}{2}}\cdot6^{\frac{k}{4}}\cdot \nu^{\frac{k+2}{4}}}\exp\!\left( 2\pi \sqrt{\frac{k\nu}{6}} \right).
		\end{align*}
		This concludes the proof.
	\end{proof}
	\begin{theorem}\label{Asymp_moex_k_meex_k}
		For fixed $k\in\mathbb{N},$ we have
		$$\sigma{meex_{k}(\nu)}\sim\sigma{moex_{k}(\nu)}\sim \frac{k^{\frac{k-2}{4}}}{2^{\frac{k+2}{2}}\cdot6^{\frac{k}{4}}\cdot \nu^{\frac{k+2}{4}}}\exp\!\left( 2\pi \sqrt{\frac{k\nu}{6}} \right)~~~~\text{as}~\nu \to \infty.$$
	\end{theorem}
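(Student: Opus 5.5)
The plan is to follow the proof of Theorem \ref{Asymp_mex_k} exactly, now starting from the product forms in Theorems \ref{Moex_k} and \ref{Meex_k}. Lemma \ref{lemma_1} already tells us that $\{\sigma moex_k(\nu)\}$ and $\{\sigma meex_k(\nu)\}$ are weakly increasing. Both generating functions have radius of convergence $1$, since $1/(q;q)_\infty^k$ does. So Theorem \ref{ingham} applies once we know the behaviour of each generating function at $q=e^{-y}$ as $y\to0^+$. It will also suffice to show that the two generating functions have the same leading asymptotic $\rho y^{\tau}e^{\xi/y}$; then both sequences get the same asymptotic formula.

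For $\sigma moex_k$, set $\mathcal{B}_k(q)=(q^{2k};q^{2k})_\infty(-q^k;q^{2k})^2_\infty/(q;q)^k_\infty$. Rather than treating the theta factor separately, I will write $(-q^k;q^{2k})_\infty=(-q^k;q^k)_\infty/(-q^{2k};q^{2k})_\infty$. Then every factor is of the form covered by \eqref{result_1} or \eqref{result_2}, after rescaling $y$ to $ky$ or $2ky$.
\begin{itemize}
\item By \eqref{result_2}, $(-e^{-ky};e^{-ky})_\infty\sim 2^{-1/2}e^{\pi^2/(12ky)}$ and $(-e^{-2ky};e^{-2ky})_\infty\sim 2^{-1/2}e^{\pi^2/(24ky)}$.
\item Hence $(-e^{-ky};e^{-2ky})_\infty\sim e^{\pi^2/(24ky)}$, and its square is $\sim e^{\pi^2/(12ky)}$.
\item By \eqref{result_1}, $(e^{-2ky};e^{-2ky})_\infty\sim\sqrt{\pi/(ky)}\,e^{-\pi^2/(12ky)}$.
\end{itemize}
The exponentials $e^{\pm\pi^2/(12ky)}$ cancel, leaving
$$\mathcal{B}_k(e^{-y})\sim\sqrt{\tfrac{\pi}{ky}}\left(\tfrac{y}{2\pi}\right)^{k/2}e^{k\pi^2/(6y)}=\frac{y^{\frac{k-1}{2}}e^{\frac{k\pi^2}{6y}}}{2^{k/2}\sqrt{k}\,\pi^{\frac{k-1}{2}}}.$$
This is exactly $\sqrt2$ times the asymptotic of $\mathcal{A}_k(e^{-y})$ found in the proof of Theorem \ref{Asymp_mex_k}. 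As a cross-check, this matches \eqref{JTP_2}: the theta series $\sum q^{kg^2}$ at $q=e^{-y}$ is $\sim\sqrt{\pi/(ky)}$ by Riemann sum/Poisson summation.

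For $\sigma meex_k$, the numerator is $2(-q^{2k};q^{2k})_\infty(q^{4k};q^{4k})_\infty$. By \eqref{result_2} and \eqref{result_1} with $2ky$ and $4ky$ this is
$$2\cdot 2^{-1/2}e^{\pi^2/(24ky)}\cdot\sqrt{\pi/(2ky)}\,e^{-\pi^2/(24ky)}=\sqrt{\pi/(ky)}.$$
This is the same as the $moex$ numerator, so the two generating functions have identical leading asymptotics.

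Now apply Theorem \ref{ingham} with $\rho=1/(2^{k/2}\sqrt{k}\,\pi^{(k-1)/2})$, $\tau=(k-1)/2$ and $\xi=k\pi^2/6$. Since $\tau$ and $\xi$ are the same as in Theorem \ref{Asymp_mex_k}, the result is that formula multiplied by $\sqrt2$. Concretely, $2^{-(k+3)/2}$ becomes $2^{-(k+2)/2}$, which is the claimed constant.

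The only delicate point is the earlier Lemma \ref{lemma_1}, whose $moex$/$meex$ cases were skipped. If needed I would verify them with the same map $\Gamma_1$. When $moex_k(\Lambda)\neq1_i$, adding $1_1$ keeps the minimal odd excludant. Otherwise, enlarge the largest part; this must not create the excludant value, which can be arranged by choosing a suitable part, and injectivity needs checking. For $meex$, one adds a $1_1$ always, since parts of size $1$ do not affect the even excludant, so that case is trivial.
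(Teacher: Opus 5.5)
Your proposal is correct and follows essentially the same route as the paper. You rewrite $(-q^k;q^{2k})_\infty=(-q^k;q^k)_\infty/(-q^{2k};q^{2k})_\infty$ and use \eqref{result_1} and \eqref{result_2} to get $\mathcal{B}_k(e^{-y})\sim\mathcal{C}_k(e^{-y})\sim y^{\frac{k-1}{2}}e^{\frac{k\pi^2}{6y}}/(2^{\frac{k}{2}}\sqrt{k}\,\pi^{\frac{k-1}{2}})$, then apply Theorem \ref{ingham} with the same $\rho,\tau,\xi$ as the paper. Your remarks on Lemma \ref{lemma_1} usefully fill in details the paper skips: for $meex_k$, simply adjoining $1_1$ is already an injective map preserving the excludant; for $moex_k$, the unchanged $\Gamma_1$ works because enlarging a part never creates a part of size $1$.
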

	\noindent We get the asymptotic behavior of $\sigma{meex(\nu)}$ and $\sigma{moex(\nu)}$  as a corollary to Theorem \ref{Asymp_moex_k_meex_k}.
	\begin{corollary}\label{asymp_meex}
		$$\sigma{meex(\nu)}\sim\sigma{moex(\nu)}\sim \frac{1}{2^{\frac{3}{2}}\cdot\sqrt[4]{6\nu^3}}\exp\!\left( \pi \sqrt{\frac{2\nu}{3}} \right)~~~~\text{as}~\nu \to \infty.$$
	\end{corollary}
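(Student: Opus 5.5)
The plan is to specialize Theorem \ref{Asymp_moex_k_meex_k} to $k=1$, after first checking that for $k=1$ the colored statistics are exactly the classical ones.

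\textbf{Step 1: reduction to $k=1$.} A $1$-colored partition is an ordinary partition in which every part carries the single subscript $1$. Dropping that subscript is a bijection $\mathcal{P}_1(\nu)\to\mathcal{P}(\nu)$. Under it, the smallest colored odd (even) positive integer missing from $\Lambda$ corresponds to the smallest odd (even) positive integer missing from $\Lambda$. Hence $\sigma moex_1(\nu)=\sigma moex(\nu)$ and $\sigma meex_1(\nu)=\sigma meex(\nu)$ for all $\nu$. As a consistency check, Theorem \ref{Moex_k} and Theorem \ref{Meex_k} at $k=1$ reduce to \eqref{moex} and \eqref{meex}:
\[
\frac{(q^2;q^2)_\infty(-q;q^2)_\infty^2}{(q;q)_\infty}=(-q;q^2)_\infty^2(-q;q)_\infty,
\qquad
\frac{2(-q^{2};q^{2})_\infty(q^{4};q^{4})_\infty}{(q;q)_\infty}=2(-q^2;q^2)_\infty^2(-q;q)_\infty .
\]
Both follow from $(q^2;q^2)_\infty/(q;q)_\infty=(-q;q)_\infty$ and $(q^4;q^4)_\infty/(q^2;q^2)_\infty=(-q^2;q^2)_\infty$.

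\textbf{Step 2: substitute $k=1$ in Theorem \ref{Asymp_moex_k_meex_k}.} The constant becomes
\[
\frac{1^{-1/4}}{2^{3/2}\cdot 6^{1/4}\cdot\nu^{3/4}}=\frac{1}{2^{3/2}\sqrt[4]{6\nu^3}} .
\]
The exponent becomes $2\pi\sqrt{\nu/6}=\pi\sqrt{4\nu/6}=\pi\sqrt{2\nu/3}$. This is exactly the claimed formula.

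\textbf{Independent check via Theorem \ref{ingham}.} This guards against an error in the general constant, and the only point needing care is the bookkeeping of constants. Lemma \ref{lemma_1} gives monotonicity. Put $q=e^{-y}$ in $(-q;q^2)_\infty^2(-q;q)_\infty$.
\begin{itemize}
\item By \eqref{result_2}, $(-e^{-y};e^{-y})_\infty\sim\frac{1}{\sqrt2}e^{\pi^2/(12y)}$.
\item Writing $(-q;q^2)_\infty=(-q;q)_\infty/(-q^2;q^2)_\infty$ and applying \eqref{result_2} to numerator and denominator gives $(-e^{-y};e^{-2y})_\infty\sim e^{\pi^2/(24y)}$.
\end{itemize}
Hence the generating function is $\sim\frac{1}{\sqrt2}e^{\pi^2/(6y)}$, so $\rho=1/\sqrt2$, $\tau=0$, $\xi=\pi^2/6$. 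Theorem \ref{ingham} then gives
\[
\frac{(\pi^2/6)^{1/4}}{\sqrt2\cdot2\sqrt\pi\,\nu^{3/4}}\,e^{\pi\sqrt{2\nu/3}}=\frac{1}{2^{3/2}\sqrt[4]{6\nu^3}}\,e^{\pi\sqrt{2\nu/3}},
\]
in agreement with Step 2.

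The same check applies to $\sigma meex(\nu)$. Here $2(-q^2;q^2)_\infty^2(-q;q)_\infty\sim2\cdot\frac12 e^{\pi^2/(12y)}\cdot\frac1{\sqrt2}e^{\pi^2/(12y)}$, which yields the same $\rho,\tau,\xi$.
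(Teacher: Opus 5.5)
Your proposal is correct and is essentially the paper's argument: the paper obtains the corollary simply by putting $k=1$ in Theorem~\ref{Asymp_moex_k_meex_k}, and your direct Ingham computation with $\rho=1/\sqrt{2}$, $\tau=0$, $\xi=\pi^2/6$ just repeats the paper's proof of that theorem in the case $k=1$. The paper leaves the identifications $\sigma moex_1(\nu)=\sigma moex(\nu)$ and $\sigma meex_1(\nu)=\sigma meex(\nu)$ implicit, whereas you state them explicitly, and your constant bookkeeping is correct.
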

	\begin{proof}[\normalfont\textbf{Proof of Theorem \ref{Asymp_moex_k_meex_k}}]
		For any $k\in \mathbb{N}$
		\begin{eqnarray}
			\mathcal{B}_k(q) :=  \sum\limits_{\nu=0}^{\infty}\sigma{moex_{k}}(\nu)q^{\nu}= \dfrac{(-q^{k};q^{2k})_{\infty}^2(q^{2k};q^{2k})_\infty}{(q;q)_{\infty}^k}=\dfrac{(-q^{k};q^{k})_{\infty}^2(q^{2k};q^{2k})_\infty}{(-q^{2k};q^{2k})_\infty^2(q;q)_{\infty}^k}.\label{B_k(q)}
		\end{eqnarray}  
		Also, $\{\sigma{moex_{k}}(\nu)\}$ is a weakly increasing sequence.\\
		Upon replacing $q$ with $e^{-y}$ in the preceding equation \eqref{B_k(q)} and using \eqref{result_1} and \eqref{result_2}, we obtain
		\begin{align*}
			\mathcal{B}_k(e^{-y})&=\frac{(-e^{-ky};e^{-ky})_\infty^2(e^{-2ky};e^{-2ky})_\infty}{(-e^{-2ky};e^{-2ky})_\infty^2(e^{-y};e^{-y})_\infty^k} \sim\frac{\left(\frac{1}{\sqrt2}e^{\frac{\pi^2}{12ky}}\right)^2\left(\sqrt{\frac{y}{2\pi}}e^{\frac{\pi^2}{6y}}\right)^k}{\left(\frac{1}{\sqrt2}e^{\frac{\pi^2}{24ky}}\right)^2\left(\sqrt{\frac{ky}{\pi}}e^{\frac{\pi^2}{12ky}}\right)}\\
			&=\frac{y^{\frac{k-1}{2}}e^{\frac{k\pi^2}{6y}}}{2^{\frac{k}{2}}\sqrt{k}~\pi^{\frac{k-1}{2}}} \text{~~~as~} y\rightarrow0^{+}.
		\end{align*}
		Using Theorem \ref{ingham} with $\rho=\frac{1}{2^{\frac{k}{2}}\sqrt{k}~\pi^{\frac{k-1}{2}}},$ $\tau=\frac{k-1}{2},$ and $\xi=\frac{k\pi^2}{6}$, we get 
		\begin{align*}
			\sigma{moex_{k}}(\nu)&\sim \frac{\frac{1}{2^{\frac{k}{2}}\sqrt{k}~\pi^{\frac{k-1}{2}}}\left(\frac{k\pi^2}{6}\right)^{\frac{k}{4}}}{2\sqrt{\pi} \nu^\frac{k+2}{4}}\exp\!\left(2\sqrt{\frac{k\pi^2\nu}{6}}\right) =\frac{k^{\frac{k-2}{4}}}{2^{\frac{k+2}{2}}\cdot6^{\frac{k}{4}}\cdot \nu^{\frac{k+2}{4}}}\exp\!\left( 2\pi \sqrt{\frac{k\nu}{6}} \right).
		\end{align*}
		Similarly, for any $k\in\mathbb{N}$
		\begin{eqnarray}
			\mathcal{C}_k(q) :=  \sum\limits_{\nu=0}^{\infty}\sigma{meex_{k}}(\nu)q^{\nu}= \dfrac{2(q^{4k};q^{4k})_\infty(-q^{2k};q^{2k})_\infty}{(q;q)_{\infty}^k}. \label{C_k(q)}
		\end{eqnarray}
		Also, $\{\sigma{meex_{k}}(\nu)\}$ is a weakly increasing sequence.\\ 
		Upon replacing $q$ with $e^{-y}$ in the preceding equation \eqref{C_k(q)} and using \eqref{result_1} and \eqref{result_2}, we obtain
		\begin{align*}
			\mathcal{C}_k(e^{-y})&=\frac{2(e^{-4ky};e^{-4ky})_\infty(-e^{-2ky};e^{-2ky})_\infty}{(e^{-y};e^{-y})_\infty^k}
			\sim\frac{\frac{2}{\sqrt2}e^{\frac{\pi^2}{24ky}}\left(\sqrt{\frac{y}{2\pi}}e^{\frac{\pi^2}{6y}}\right)^k}{\sqrt{\frac{4ky}{2\pi}}e^{\frac{\pi^2}{24ky}}}\\
			&=\frac{y^{\frac{k-1}{2}}e^{\frac{k\pi^2}{6y}}}{2^{\frac{k}{2}}\sqrt{k}~\pi^{\frac{k-1}{2}}}, \text{~~~as~} y\rightarrow0^{+}.
		\end{align*}
		Using Theorem \ref{ingham} with $\rho=\frac{1}{2^{\frac{k}{2}}\sqrt{k}~\pi^{\frac{k-1}{2}}},$ $\tau=\frac{k-1}{2},$ and $\xi=\frac{k\pi^2}{6}$, we get 
		\begin{align*}
			\sigma{meex_{k}}(\nu)&\sim \frac{\frac{1}{2^{\frac{k}{2}}\sqrt{k}~\pi^{\frac{k-1}{2}}}\left(\frac{k\pi^2}{6}\right)^{\frac{k}{4}}}{2\sqrt{\pi} \nu^\frac{k+2}{4}}\exp\!\left(2\sqrt{\frac{k\pi^2\nu}{6}}\right)
			=\frac{k^{\frac{k-2}{4}}}{2^{\frac{k+2}{2}}\cdot6^{\frac{k}{4}}\cdot \nu^{\frac{k+2}{4}}}\exp\!\left( 2\pi \sqrt{\frac{k\nu}{6}} \right).
		\end{align*}
		This concludes the proof.
	\end{proof}
	\subsection{Analogous results for $\sigma_o mex(\nu)$ and $\sigma_e mex(\nu)$ in $k$-colored partitions}
	\noindent In this subsection, we derive the generating functions for $\sigma_o mex_k(\nu)$ and $\sigma_e mex_k(\nu),$ which complement the results given by \eqref{sigma_o} and \eqref{sigma_e}.
	\begin{theorem} \label{baruah_odd}
		For fixed $k\in\mathbb{N}$ and $\nu \in \mathbb{Z}_{\geq0},$ we have
		\begin{eqnarray}\label{eqn_baruah_odd}
			\sum\limits_{\nu=0}^{\infty}\sigma_o mex_k(\nu)q^{\nu} =\dfrac{1}{2(q;q)^k_\infty}\left\{(-q^k;q^k)_\infty(q^{2k};q^{2k})_\infty+(q^k;q^k)^3_\infty\right\}.
		\end{eqnarray}
	\end{theorem}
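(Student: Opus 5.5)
The plan is to reuse the intermediate computation in the proof of Theorem \ref{Mex_k}, restricted to partitions whose minimal excludant has odd size. That proof shows that the $k$-colored partitions $\Lambda$ with $mex_k(\Lambda)$ of size $\mathfrak{m}$ (any color $\mathfrak{j}$) contribute
\[
\mathfrak{m}\,q^{\frac{k\mathfrak{m}(\mathfrak{m}-1)}{2}}\left(1-q^{k\mathfrak{m}}\right)\frac{1}{(q;q)^k_\infty}
\]
to $\sum_{\nu}\sigma mex_k(\nu)q^\nu$, after summing over $\mathfrak{j}$ with the geometric sum $\sum_{\mathfrak{j}=1}^k q^{\mathfrak{m}(\mathfrak{j}-1)}$.

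Following the paper's convention for $\sigma_o mex_k$, where parity refers to the size of the colored part, I will keep only the terms with $\mathfrak{m}$ odd. I will isolate them with the standard device $\tfrac12\bigl(1-(-1)^{\mathfrak{m}}\bigr)$. This splits the generating function as
\[
\frac{1}{2(q;q)^k_\infty}\left(S_1+S_2\right),
\]
where
\[
S_1=\sum_{\mathfrak{m}\ge1}\mathfrak{m}\,q^{\frac{k\mathfrak{m}(\mathfrak{m}-1)}{2}}\left(1-q^{k\mathfrak{m}}\right),
\qquad
S_2=\sum_{\mathfrak{m}\ge1}(-1)^{\mathfrak{m}+1}\mathfrak{m}\,q^{\frac{k\mathfrak{m}(\mathfrak{m}-1)}{2}}\left(1-q^{k\mathfrak{m}}\right).
\]
By \eqref{eqn_sigma_mex_k} and \eqref{eqn_kn(n+1)/2}, already established in the proof of Theorem \ref{Mex_k}, the unsigned sum is $S_1=(-q^k;q^k)_\infty(q^{2k};q^{2k})_\infty$. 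This supplies the first term inside the braces of \eqref{eqn_baruah_odd}.

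The remaining work is the alternating sum $S_2$. I will write $1-q^{k\mathfrak{m}}$ as two separate series, noting that
\[
\frac{k\mathfrak{m}(\mathfrak{m}-1)}{2}+k\mathfrak{m}=\frac{k\mathfrak{m}(\mathfrak{m}+1)}{2}.
\]
In the first series I shift $\mathfrak{m}\mapsto\mathfrak{m}+1$, which gives $\sum_{\mathfrak{m}\ge0}(-1)^{\mathfrak{m}}(\mathfrak{m}+1)q^{k\mathfrak{m}(\mathfrak{m}+1)/2}$. The second series is $\sum_{\mathfrak{m}\ge1}(-1)^{\mathfrak{m}+1}\mathfrak{m}\,q^{k\mathfrak{m}(\mathfrak{m}+1)/2}$. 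Subtracting the second from the first, the coefficients combine to
\[
S_2=\sum_{\mathfrak{m}\ge0}(-1)^{\mathfrak{m}}(2\mathfrak{m}+1)\,q^{\frac{k\mathfrak{m}(\mathfrak{m}+1)}{2}}.
\]

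The main obstacle is identifying $S_2$ in product form. I will invoke Jacobi's identity
\[
(q;q)_\infty^3=\sum_{n\ge0}(-1)^n(2n+1)q^{n(n+1)/2}
\]
with $q$ replaced by $q^k$, so that $S_2=(q^k;q^k)^3_\infty$. This identity is not among the JTP specializations \eqref{JTP_1}--\eqref{JTP_3} stated earlier, so I would either cite it as classical or derive it. To derive it from \eqref{JTP}, I would set $a=-zq$ and $q\mapsto q^{1/2}$ (equivalently, start from \eqref{JTP_1}), divide out the factor $(1-z^{-1})$, differentiate in $z$, and let $z\to1$. Alternatively, one could pair the terms $n$ and $-n-1$ in the bilateral series. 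Substituting $S_1$ and $S_2$ into the split above gives \eqref{eqn_baruah_odd}.
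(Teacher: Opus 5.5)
Your proof is correct. It ends at the same split as the paper (half the numerator from Theorem \ref{Mex_k} plus half of Jacobi's series $\sum_{n\ge0}(-1)^n(2n+1)q^{kn(n+1)/2}$), but it gets there by a different decomposition.

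The paper restricts directly to sizes $\mathfrak{m}=2g+1$. It adds and subtracts $\sum_{g\ge0}q^{k\binom{2g+2}{2}}$ to rewrite the odd-size series as $\sum_{g\ge0}(-1)^g(g+1)q^{k\binom{g+1}{2}}+\sum_{g\ge0}q^{k\binom{2g+2}{2}}$. It then doubles, splits $2g+2=(2g+1)+1$, and recombines the leftovers into $\sum_{g\ge0}q^{k\binom{g+1}{2}}$. That series is evaluated as $(-q^k;q^k)_\infty(q^{2k};q^{2k})_\infty$ via Gauss's and Euler's identities.

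Your projector $\tfrac12\bigl(1-(-1)^{\mathfrak{m}}\bigr)$ makes the unsigned half literally the series already summed in Theorem \ref{Mex_k}. So only the signed half needs work: one telescoping step plus Jacobi. This buys you three things:
\begin{itemize}
\item it dispenses with the add-and-subtract manipulation and the Gauss--Euler evaluation;
\item it explains why the $\sigma mex_k$ generating function sits inside the braces;
\item it gives Theorem \ref{baruah_even} at the same time, by flipping the sign of $S_2$.
\end{itemize}
The paper's argument, in turn, has the merit of being independent of Theorem \ref{Mex_k}.

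One small fix to your optional derivation of Jacobi's identity. Either divide by $(1-z^{-1})$ and let $z\to1$, or differentiate at $z=1$ without dividing. Doing both in sequence only yields $0=0$, since after the division the product $(zq;q)_\infty(q/z;q)_\infty(q;q)_\infty$ is invariant under $z\mapsto z^{-1}$.
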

	\begin{proof}[\normalfont\textbf{Proof}]
		We fix $k\in\mathbb{N}$ and define $\mathcal{M}^{O}_k(z_1,z_2,z_3,\dots,z_k,q)$ as the series in which the coefficient of $z_\mathfrak{j}^\mathfrak{m}q^{\nu}$, denoted by $|\mathcal{P}^{O}_k((2g+1)_\mathfrak{j},\nu)|,$ represents the count of $k$-colored partitions $\Lambda$ of a positive integer $\nu$ with $mex_{k}(\Lambda)$=$(2g+1)_\mathfrak{j}$ for some $g\in\mathbb{Z}_{\geq0},$ where $1\leq \mathfrak{j} \leq k$.
		\begin{eqnarray*}
			&&\mathcal{M}^{O}_k(z_1,z_2,z_3,\dots,z_k,q)  := \sum\limits_{\nu=0}^{\infty}\sum\limits_{g=0}^{\infty}\sum\limits_{\mathfrak{j}=1}^{k}|\mathcal{P}^{O}_k((2g+1)_\mathfrak{j},\nu)|z_\mathfrak{j}^{(2g+1)}q^{\nu}\\
			&&= \tiny{\sum\limits_{g=0}^{\infty}\sum\limits_{\mathfrak{j}=1}^{k}q^{\{\underbrace{(1+\cdots+1)}_{k~\textup{times}}+\underbrace{(2+\cdots+2)}_{k~\textup{times}}+\cdots+\underbrace{(2g+\cdots+2g)}_{k~\textup{times}}+\underbrace{((2g+1)+\cdots+(2g+1))}_{(\mathfrak{j}-1)~\textup{times}}\}}}~~~~~~~~~\\[-3mm]
			&&~~~~~~~~~~~~~~~~~~~~~~~~~~~~~~~~~~~~~~~~~~~~~~~~~~\times\left(1-q^{(2g+1)}\right)z_\mathfrak{j}^{(2g+1)}\left(\prod\limits_{\nu=1}^{\infty}\dfrac{1}{(1-q^{\nu})^k}\right)\\
			&&=\sum\limits_{g=0}^{\infty}\sum\limits_{\mathfrak{j}=1}^{k}q^{\left\{k(1+2+\cdots+2g)+(2g+1)(\mathfrak{j}-1)\right\}}\left(1-q^{(2g+1)}\right)z_\mathfrak{j}^{(2g+1)}\left(\prod\limits_{\nu=1}^{\infty}\dfrac{1}{(1-q^{\nu})^k}\right) \\
			&&=\sum\limits_{g=0}^{\infty}\sum\limits_{\mathfrak{j}=1}^{k}\underbrace{q^{\left\{\frac{2kg(2g+1)}{2}+(2g+1)(\mathfrak{j}-1)\right\}}\left(1-q^{(2g+1)}\right)z_\mathfrak{j}^{(2g+1)}\left(\prod\limits_{\nu=1}^{\infty}\dfrac{1}{(1-q^{\nu})^k}\right) }_{\textup{Part I}}. 
		\end{eqnarray*}
		Now,
		\begin{align*}
			&\sum\limits_{\nu=0}^{\infty}\sigma_o{mex_{k}}(\nu)q^{\nu} := \sum\limits_{g=0}^{\infty}\sum\limits_{\mathfrak{j}=1}^{k}\left(\left.\dfrac{\partial}{\partial z_\mathfrak{j}}\right|_{z_\mathfrak{j}=1}\textup{Part I}\right)\\
			&=
			\sum\limits_{g=0}^{\infty}\sum\limits_{\mathfrak{j}=1}^{k}\left\{\left.\dfrac{\partial}{\partial z_\mathfrak{j}}\right|_{z_\mathfrak{j}=1} \left(q^{\left\{\frac{2kg(2g+1)}{2}+(2g+1)(\mathfrak{j}-1)\right\}}\left(1-q^{(2g+1)}\right)z_\mathfrak{j}^{(2g+1)}\left(\prod\limits_{\nu=1}^{\infty}\dfrac{1}{(1-q^{\nu})^k}\right)  \right)\right\}\\
			&=\sum\limits_{g=0}^{\infty}\sum\limits_{\mathfrak{j}=1}^{k}(2g+1) q^{\left\{\frac{2kg(2g+1)}{2}+(2g+1)(\mathfrak{j}-1)\right\}}(1-q^{(2g+1)})\left(\prod\limits_{\nu=1}^{\infty}\dfrac{1}{(1-q^{\nu})^k}\right) \\
			&=\sum\limits_{g=0}^{\infty}(2g+1) q^{\frac{2kg(2g+1)}{2}}\left(1-q^{(2g+1)}\right)\left(\sum\limits_{\mathfrak{j}=1}^{k}q^{(2g+1)(\mathfrak{j}-1)}\right) \left(\dfrac{1}{(q;q)^k_\infty}\right)\\
			&=\sum\limits_{g=0}^{\infty} (2g+1) q^{\frac{2kg(2g+1)}{2}}\left(1-q^{k(2g+1)}\right)\left(\dfrac{1}{(q;q)^k_\infty}\right)\\
			&=\left(\sum\limits_{g=0}^{\infty}(2g+1)q^{\frac{k\cdot2g(2g+1)}{2}}-\sum\limits_{g=0}^{\infty}(2g+1) q^{\frac{k\cdot(2g+2)(2g+1)}{2}}\right)\left(\dfrac{1}{(q;q)^k_\infty}\right)\\
			&=\left(\sum\limits_{g=0}^{\infty}(2g+1)q^{k\binom{2g+1}{2}}-\sum\limits_{g=0}^{\infty}(2g+1) q^{k\binom{2g+2}{2}}-\sum\limits_{g=0}^{\infty}q^{k\binom{2g+2}{2}}+\sum\limits_{g=0}^{\infty}q^{k\binom{2g+2}{2}}\right)\left(\dfrac{1}{(q;q)^k_\infty}\right)\\
			&=\left(\sum\limits_{g=0}^{\infty}(2g+1)q^{k\binom{2g+1}{2}}-\sum\limits_{g=0}^{\infty}(2g+2) q^{k\binom{2g+2}{2}}+\sum\limits_{g=0}^{\infty}q^{k\binom{2g+2}{2}}\right)\left(\dfrac{1}{(q;q)^k_\infty}\right)\\\
			&=\left(\sum\limits_{g=0}^{\infty}(g+1)(-1)^gq^{k\binom{g+1}{2}}+\sum\limits_{g=0}^{\infty}q^{k\binom{2g+2}{2}}\right)\left(\dfrac{1}{(q;q)^k_\infty}\right).
		\end{align*}
		By multiplying and dividing the right hand side by 2 and then using Jacobi's identity  \cite[Eq.(10.4.9),p.~500]{special_functions}, Gauss's identity \cite[Eq.(2.2.13),p.~23]{andrews_b} and Euler's identity \cite[Eq.(1.2.5),p.~5]{andrews_b}, we obtain
		\begin{align*}
			&\sum\limits_{\nu=0}^{\infty}\sigma_o{mex_{k}}(\nu)q^{\nu}=\left(\sum\limits_{g=0}^{\infty}(2g+2)(-1)^gq^{k\binom{g+1}{2}}+\sum\limits_{g=0}^{\infty}2q^{k\binom{2g+2}{2}}\right)\left(\dfrac{1}{2(q;q)^k_\infty}\right)\\
			&=\left(\sum\limits_{g=0}^{\infty}(2g+1)(-1)^gq^{k\binom{g+1}{2}}+\sum\limits_{g=0}^{\infty}(-1)^gq^{k\binom{g+1}{2}}+\sum\limits_{g=0}^{\infty}2q^{k\binom{2g+2}{2}}\right)\left(\dfrac{1}{2(q;q)^k_\infty}\right)\\
			&=\left(\sum\limits_{g=0}^{\infty}(2g+1)(-1)^gq^{k\binom{g+1}{2}}+\sum\limits_{g=0}^{\infty}q^{k\binom{2g+1}{2}}-\sum\limits_{g=0}^{\infty}q^{k\binom{2g+2}{2}}+\sum\limits_{g=0}^{\infty}2q^{k\binom{2g+2}{2}}\right)\left(\dfrac{1}{2(q;q)^k_\infty}\right)\\
			&=\left(\sum\limits_{g=0}^{\infty}(2g+1)(-1)^gq^{k\binom{g+1}{2}}+\sum\limits_{g=0}^{\infty}q^{k\binom{2g+1}{2}}+\sum\limits_{g=0}^{\infty}q^{k\binom{2g+2}{2}}\right)\left(\dfrac{1}{2(q;q)^k_\infty}\right)\\
			&=\left(\sum\limits_{g=0}^{\infty}(2g+1)(-1)^gq^{k\binom{g+1}{2}}+\sum\limits_{g=0}^{\infty}q^{k\binom{g+1}{2}}\right)\left(\dfrac{1}{2(q;q)^k_\infty}\right)\\
			&=\left((q^k;q^k)^3_\infty+\frac{(q^{2k};q^{2k})_\infty}{(q^k;q^{2k})_\infty}\right)\left(\dfrac{1}{2(q;q)^k_\infty}\right)\\
			&=\dfrac{1}{2(q;q)^k_\infty}\left\{(-q^k;q^k)_\infty(q^{2k};q^{2k})_\infty+(q^k;q^k)^3_\infty\right\}.
		\end{align*}
		This concludes the proof.
	\end{proof}
	\begin{theorem} \label{baruah_even}
		For fixed $k\in\mathbb{N}$ and $\nu \in \mathbb{Z}_{\geq0},$ we have
		\begin{eqnarray*}
			\sum\limits_{\nu=0}^{\infty}\sigma_e mex_k(\nu)q^{\nu}=\dfrac{1}{2(q;q)^k_\infty}\left\{(-q^k;q^k)_\infty(q^{2k};q^{2k})_\infty-(q^k;q^k)^3_\infty\right\}.\label{eqn_baruah_even}
		\end{eqnarray*}
	\end{theorem}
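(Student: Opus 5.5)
The quickest route is subtraction. Since every $k$-colored partition has a minimal excludant whose size is either odd or even, we have $\sigma mex_k(\nu)=\sigma_o mex_k(\nu)+\sigma_e mex_k(\nu)$ for every $\nu\geq 0$. Here, as in Theorem \ref{baruah_odd}, parity refers to the size $\mathfrak{m}$ of $mex_k(\Lambda)=\mathfrak{m}_\mathfrak{j}$, and the value summed is the size $\mathfrak{m}$. So I would write
\[
\sum_{\nu=0}^{\infty}\sigma_e mex_k(\nu)q^{\nu}=\sum_{\nu=0}^{\infty}\sigma mex_k(\nu)q^{\nu}-\sum_{\nu=0}^{\infty}\sigma_o mex_k(\nu)q^{\nu},
\]
substitute \eqref{eqn_Mex_k} from Theorem \ref{Mex_k} and \eqref{eqn_baruah_odd} from Theorem \ref{baruah_odd}, and simplify:
\[
\frac{(-q^k;q^k)_\infty(q^{2k};q^{2k})_\infty}{(q;q)_\infty^k}-\frac{(-q^k;q^k)_\infty(q^{2k};q^{2k})_\infty+(q^k;q^k)_\infty^3}{2(q;q)_\infty^k},
\]
which is exactly the claimed expression.

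To keep the argument parallel to the proof of Theorem \ref{baruah_odd}, I would also carry out the direct computation. Restrict the construction of $\mathcal{M}_k$ to excludants $(2g)_\mathfrak{j}$ with $g\ge1$. Differentiating and summing over $\mathfrak{j}$ as before gives
\[
\frac{1}{(q;q)_\infty^k}\sum_{g\ge1}2g\Bigl(q^{k\binom{2g}{2}}-q^{k\binom{2g+1}{2}}\Bigr).
\]
Next, rewrite $2g\,q^{k\binom{2g+1}{2}}=(2g+1)q^{k\binom{2g+1}{2}}-q^{k\binom{2g+1}{2}}$. This turns the sum into
\[
\sum_{g\ge0}(-1)^{g+1}(g+1)q^{k\binom{g+1}{2}}+\sum_{g\ge0}q^{k\binom{2g+1}{2}},
\]
where the index shift must absorb the $g=0$ term $q^0=1$ correctly.

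The final step is to identify these sums. Multiply and divide by $2$, split $2(g+1)=(2g+1)+1$, and apply Jacobi's identity $\sum(-1)^g(2g+1)q^{k\binom{g+1}{2}}=(q^k;q^k)^3_\infty$ together with Gauss's identity $\sum_{g\ge0}q^{k\binom{g+1}{2}}=(q^{2k};q^{2k})_\infty/(q^k;q^{2k})_\infty=(-q^k;q^k)_\infty(q^{2k};q^{2k})_\infty$ (Euler). The main obstacle is bookkeeping: checking that the alternating sum $\sum(-1)^g q^{k\binom{g+1}{2}}$ combines with the leftover odd-index triangular sum into $+\tfrac12$ of the full triangular sum, with the sign in front of the cube coming out negative. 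Because the subtraction argument settles the result immediately, I would present that as the proof and mention the direct computation only as a consistency check.
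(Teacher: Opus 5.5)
Your subtraction argument is the paper's proof: it obtains the result by subtracting the generating function of Theorem \ref{baruah_odd} from that of Theorem \ref{Mex_k}, and your simplification is correct. Your optional direct computation also checks out. The extra $g=0$ terms $-1$ and $+1$ cancel, and the alternating sum combines with the odd-index triangular sum to give $\tfrac12\sum_{g\ge0}q^{k\binom{g+1}{2}}-\tfrac12(q^k;q^k)_\infty^3$, as claimed.
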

	\begin{proof}[\normalfont\textbf{Proof}]
		Subtracting equation \eqref{eqn_baruah_odd} from  equation \eqref{eqn_Mex_k} yields the result.
	\end{proof}
	
	\section{Least $r$-gaps in $k$-colored partitions}\label{sec4}
	Being motivated by the generalization of minimal excludant given in \cite{Wagner}, we generalize the concept of minimal excludants to the least $r$-gaps in $k$-colored partitions and obtain the generating functions for $\sigma{gap_k^r}(\nu),$ $\sigma{ogap_k^r}(\nu),$ and $\sigma{egap_k^r}(\nu)$. These functions complement the results presented in Theorems $\ref{Mex_k},$ $\ref{Moex_k},$ and $\ref{Meex_k},$ respectively.
	By observation, one can see that for $r=1,$ the least $r$-gap, least odd $r$-gap, and least even $r$-gap of a $k$-colored partition $\Lambda$ coincide with the minimal excludant, minimal odd excludant, and minimal even excludant of a $k$-colored partition $\Lambda,$ respectively.
	\subsection{Generating functions for $\sigma{gap_k^r}(\nu),$ $\sigma{ogap_k^r}(\nu),$ and $\sigma{egap_k^r}(\nu)$ }
	\begin{theorem}\label{least_gap}
		For fixed $r,k\in\mathbb{N}$ and $\nu \in \mathbb{Z}_{\geq0},$ we have 
		$$ \sum\limits_{\nu=0}^{\infty}\sigma{gap_k^r}(\nu)q^{\nu} = \dfrac{(q^{2kr};q^{2kr})_\infty(-q^{kr};q^{kr})_\infty}{(q;q)_{\infty}^k}.	$$
	\end{theorem}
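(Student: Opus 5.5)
The plan is to mimic the proof of Theorem \ref{Mex_k}, replacing ``occurs at least once'' by ``occurs at least $r$ times''. A $k$-colored partition $\Lambda$ has $gap_k^r(\Lambda)=\mathfrak{m}_\mathfrak{j}$ exactly when three conditions hold:
\begin{itemize}
\item[i.] every colored integer $\ell_i$ with $\ell<\mathfrak{m}$ (all $k$ colors) occurs at least $r$ times;
\item[ii.] the colors $\mathfrak{m}_1,\dots,\mathfrak{m}_{\mathfrak{j}-1}$ each occur at least $r$ times;
\item[iii.] $\mathfrak{m}_\mathfrak{j}$ occurs at most $r-1$ times.
\end{itemize}
Stripping $r$ copies of each forced part contributes the factor
\[
q^{r\left(k(1+2+\cdots+(\mathfrak{m}-1))+\mathfrak{m}(\mathfrak{j}-1)\right)}=q^{r\mathfrak{m}\left(\frac{k(\mathfrak{m}-1)}{2}+(\mathfrak{j}-1)\right)}.
\]
What remains is an arbitrary $k$-colored partition, except that $\mathfrak{m}_\mathfrak{j}$ occurs fewer than $r$ times. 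Its generating function is $(1-q^{r\mathfrak{m}})/(q;q)^k_\infty$, since the factor $1/(1-q^{\mathfrak{m}})$ for that color is replaced by $(1-q^{r\mathfrak{m}})/(1-q^{\mathfrak{m}})$.

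Introducing variables $z_\mathfrak{j}$ marking the size of the gap and differentiating at $z_\mathfrak{j}=1$, exactly as in Theorem \ref{Mex_k}, gives
\[
\sum_{\nu\ge0}\sigma gap_k^r(\nu)q^\nu=\frac{1}{(q;q)^k_\infty}\sum_{\mathfrak{m}\ge1}\mathfrak{m}\,q^{\frac{kr\mathfrak{m}(\mathfrak{m}-1)}{2}}(1-q^{r\mathfrak{m}})\sum_{\mathfrak{j}=1}^{k}q^{r\mathfrak{m}(\mathfrak{j}-1)}.
\]
The inner geometric sum telescopes against $(1-q^{r\mathfrak{m}})$ to give $1-q^{kr\mathfrak{m}}$. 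This yields
\[
\sum_{\mathfrak{m}\ge1}\mathfrak{m}\left(q^{\frac{kr(\mathfrak{m}^2-\mathfrak{m})}{2}}-q^{\frac{kr(\mathfrak{m}^2+\mathfrak{m})}{2}}\right)=-\sum_{\mathfrak{m}=-\infty}^{\infty}\mathfrak{m}\,q^{\frac{kr\mathfrak{m}(\mathfrak{m}+1)}{2}},
\]
by the same reindexing $\mathfrak{m}\mapsto-\mathfrak{m}$ used in \eqref{eqn_sigma_mex_k}.

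This is precisely \eqref{eqn_sigma_mex_k} with $k$ replaced by $kr$. Applying \eqref{eqn_kn(n+1)/2} with $k$ replaced by $kr$, which is legitimate since that identity holds for every positive integer parameter, gives $-(q^{2kr};q^{2kr})_\infty(-q^{kr};q^{kr})_\infty$. This produces the stated product.

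The main obstacle is the combinatorial bookkeeping in the first step. One must verify that the lexicographic order on colored integers makes condition ii.\ exactly ``colors $1,\dots,\mathfrak{j}-1$ of size $\mathfrak{m}$''. One must also check that removing the forced $r$ copies is a bijection onto partitions in which $\mathfrak{m}_\mathfrak{j}$ has multiplicity at most $r-1$, with no constraints on other parts. The remaining steps are a direct transcription of the $r=1$ computation.
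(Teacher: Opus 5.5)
Your proposal is correct and follows essentially the same route as the paper's sketch. Both arguments strip $r$ copies of each forced colored part, mark the gap size with $z_\mathfrak{j}$ and differentiate at $z_\mathfrak{j}=1$, let the geometric sum over colors collapse to $1-q^{kr\mathfrak{m}}$, and then apply the Jacobi triple product consequence for $\sum_{\nu}\nu q^{k\nu(\nu+1)/2}$ with $k$ replaced by $kr$. The only cosmetic difference is that the paper sums explicitly over the exact multiplicity $\mathfrak{i}\in\{0,\dots,r-1\}$ of $\mathfrak{m}_\mathfrak{j}$, using the factor $q^{\mathfrak{i}\mathfrak{m}}(1-q^{\mathfrak{m}})$; this sum collapses to your $1-q^{r\mathfrak{m}}$.
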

	\begin{theorem}\label{least_odd_gap}
		For fixed $r,k\in\mathbb{N}$ and $\nu \in \mathbb{Z}_{\geq0},$ we have 
		$$ \sum\limits_{\nu=0}^{\infty}\sigma{ogap_k^r}(\nu)q^{\nu} = \dfrac{(-q^{kr};q^{2kr})_{\infty}^2(q^{2kr};q^{2kr})_\infty}{(q;q)_{\infty}^k}.	$$
	\end{theorem}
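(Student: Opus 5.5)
The plan is to mirror the proof of Theorem \ref{Moex_k}, with every odd part size now required $r$ times in each of the $k$ colors. Fix $k,r$. A $k$-colored partition $\Lambda$ has $ogap_k^r(\Lambda)=(2g-1)_\mathfrak{j}$ exactly when three conditions hold:
\begin{itemize}
\item every colored part $(2i-1)_c$ with $i<g$ and $1\le c\le k$ occurs at least $r$ times;
\item each $(2g-1)_c$ with $c<\mathfrak{j}$ occurs at least $r$ times;
\item $(2g-1)_\mathfrak{j}$ occurs fewer than $r$ times.
\end{itemize}
All other colored parts are unrestricted.

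Remove the $r$ forced copies of each of these parts. The forced copies contribute $q^{kr(1+3+\cdots+(2g-3))+r(2g-1)(\mathfrak{j}-1)}=q^{kr(g-1)^2+r(2g-1)(\mathfrak{j}-1)}$. The condition ``fewer than $r$ copies of $(2g-1)_\mathfrak{j}$'' is encoded by replacing the factor $1/(1-q^{2g-1})$ with $(1-q^{r(2g-1)})/(1-q^{2g-1})$. The generating function for partitions with $ogap_k^r=(2g-1)_\mathfrak{j}$ is therefore
\[
q^{kr(g-1)^2+r(2g-1)(\mathfrak{j}-1)}\bigl(1-q^{r(2g-1)}\bigr)\frac{1}{(q;q)_\infty^k}.
\]
As in the earlier proofs, I would attach $z_\mathfrak{j}^{2g-1}$ and differentiate at $z_\mathfrak{j}=1$, which simply weights each term by $2g-1$.

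Next I sum over $\mathfrak{j}$. The sum $\sum_{\mathfrak{j}=1}^k q^{r(2g-1)(\mathfrak{j}-1)}$ is geometric, and multiplying it by $(1-q^{r(2g-1)})$ telescopes to $1-q^{kr(2g-1)}$. This gives
\[
\sum_{\nu\ge0}\sigma ogap_k^r(\nu)q^\nu=\frac{1}{(q;q)_\infty^k}\sum_{g\ge1}(2g-1)\bigl(q^{kr(g-1)^2}-q^{krg^2}\bigr).
\]
This is exactly the series from the proof of Theorem \ref{Moex_k} with $k$ replaced by $kr$. The same reindexing then gives $\sum_{g\in\mathbb{Z}}q^{krg^2}$. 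Applying \eqref{JTP_2} with $k\to kr$ yields $(-q^{kr};q^{2kr})_\infty^2(q^{2kr};q^{2kr})_\infty$, which is the claimed formula.

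The main obstacle is the combinatorial step: checking that the factorization is correct. Removing $r$ copies of each forced colored part must give a bijection onto unrestricted $k$-colored partitions, except that $(2g-1)_\mathfrak{j}$ is capped at $r-1$ copies. Even sizes and odd sizes larger than $2g-1$ stay free, and colors $c>\mathfrak{j}$ of size $2g-1$ are unconstrained. Once that is verified, the remaining steps are the algebra already carried out in Theorem \ref{Moex_k}.
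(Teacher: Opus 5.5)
Your proposal is correct and is essentially the paper's argument: the paper only states that this theorem follows by combining the $r$-copy construction from the sketch of Theorem \ref{least_gap} with the computation in Theorem \ref{Moex_k}. Your derivation carries this out, reducing to $\sum_{g\in\mathbb{Z}}q^{krg^2}/(q;q)_\infty^k$ and applying \eqref{JTP_2} with $k\to kr$. The only cosmetic difference is that the paper's sketch encodes ``fewer than $r$ copies of $(2g-1)_{\mathfrak{j}}$'' as $\sum_{\mathfrak{i}=0}^{r-1}q^{\mathfrak{i}(2g-1)}(1-q^{2g-1})$, which equals your factor $1-q^{r(2g-1)}$.
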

	\begin{theorem}\label{least_even_gap}
		For fixed $r,k\in\mathbb{N}$ and $\nu \in \mathbb{Z}_{\geq0},$ we have 
		$$ \sum\limits_{\nu=0}^{\infty}\sigma{egap_k^r}(\nu)q^{\nu} = \dfrac{2(q^{4kr};q^{4kr})_\infty(-q^{2kr};q^{2kr})_\infty}{(q;q)_{\infty}^k}.	$$
	\end{theorem}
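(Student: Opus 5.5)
The plan is to follow the proof of Theorem \ref{Meex_k} (itself modelled on Theorem \ref{Moex_k}), with each multiplicity threshold $1$ replaced by $r$.

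\textbf{Step 1: count partitions with a given least even $r$-gap.} Fix $k$ and $r$. The condition $egap_k^r(\Lambda)=(2g)_{\mathfrak j}$, with $g\ge1$ and $1\le\mathfrak j\le k$, means three things:
\begin{itemize}
\item every colored part $(2t)_i$ with $t<g$ and $1\le i\le k$ occurs at least $r$ times;
\item each $(2g)_i$ with $i<\mathfrak j$ occurs at least $r$ times;
\item $(2g)_{\mathfrak j}$ occurs fewer than $r$ times.
\end{itemize}
We remove $r$ copies of each forced part; what remains is an arbitrary $k$-colored partition, except that $(2g)_{\mathfrak j}$ is restricted to multiplicity $<r$. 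The forced parts have total weight
\[
rk(2+4+\cdots+2(g-1))+2gr(\mathfrak j-1)=rkg(g-1)+2gr(\mathfrak j-1).
\]
The multiplicity restriction on $(2g)_{\mathfrak j}$ contributes the factor $(1-q^{2gr})$ against the free product $1/(q;q)^k_\infty$.

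\textbf{Step 2: sum the minimal excludants.} Weighting each count by $2g$, as in the derivative step of the earlier proofs, gives
\[
\sum_{\nu\ge0}\sigma egap_k^r(\nu)q^\nu=\frac{1}{(q;q)_\infty^k}\sum_{g\ge1}2g\,q^{rkg(g-1)}(1-q^{2gr})\sum_{\mathfrak j=1}^k q^{2gr(\mathfrak j-1)}.
\]
The inner sum telescopes, since $(1-q^{2gr})\sum_{\mathfrak j}q^{2gr(\mathfrak j-1)}=1-q^{2grk}$. Writing $Q=q^{kr}$, the $g$-sum becomes
\[
\sum_{g\ge1}2g\bigl(Q^{g(g-1)}-Q^{g(g+1)}\bigr).
\]
Shifting $g\to g+1$ in the first piece turns this into $2\sum_{g\ge0}(g+1)Q^{g(g+1)}-2\sum_{g\ge1}gQ^{g(g+1)}=2\sum_{g\ge0}Q^{g(g+1)}$.

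\textbf{Step 3: close the theta series.} Since $g(g+1)$ is invariant under $g\mapsto -1-g$, we have $2\sum_{g\ge0}Q^{g(g+1)}=\sum_{g\in\mathbb Z}Q^{g(g+1)}$. Equation \eqref{JTP_1} with $z=1$ and $q^k$ replaced by $Q$ evaluates this as
\[
(-Q^2;Q^2)_\infty(-1;Q^2)_\infty(Q^2;Q^2)_\infty=2(-Q^2;Q^2)^2_\infty(Q^2;Q^2)_\infty .
\]
To reach the stated form we use $(-Q^2;Q^2)_\infty(Q^2;Q^2)_\infty=(Q^4;Q^4)_\infty$, which gives $2(-Q^2;Q^2)_\infty(Q^4;Q^4)_\infty$. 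Substituting $Q=q^{kr}$ yields exactly the right-hand side of Theorem \ref{least_even_gap}.

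\textbf{Main obstacle.} The only delicate point is the bookkeeping in Step 1. With the lexicographic color order, the weight of the forced parts must be $r(\mathfrak j-1)\cdot 2g$ from the same size, and the restriction on the gap part alone must give the factor $(1-q^{2gr})$. Getting this exactly right is what makes the inner sum collapse to $1-q^{2grk}$. As a check, setting $r=1$ should recover Theorem \ref{Meex_k}.
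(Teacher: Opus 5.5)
Your proposal is correct and takes essentially the same route as the paper, which only sketches this case by analogy with Theorems \ref{Mex_k}--\ref{Meex_k} and the sketch for Theorem \ref{least_gap}. You fix the gap $(2g)_{\mathfrak j}$, strip $r$ copies of each forced colored part, get the factor $(1-q^{2gr})$, telescope the color sum to $1-q^{2grk}$, reduce to $\sum_{g\in\mathbb{Z}}q^{kr\,g(g+1)}$, and close with \eqref{JTP_1}. Weighting directly by $2g$ and using the single factor $(1-q^{2gr})$, instead of the paper's formal $\partial/\partial z_{\mathfrak j}$ step and its sum $\sum_{\mathfrak i=0}^{r-1}q^{2g\mathfrak i}(1-q^{2g})$, are only cosmetic differences.
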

	\begin{remark}
		For $k=1,$ the result of Theorem  $\ref{least_gap}$ coincides with the equation $\eqref{least_gap_merca}$.
	\end{remark}
	\subsection{Asymptotic behavior of $\sigma{gap_k^r}(\nu),$ $\sigma{ogap_k^r}(\nu),$ and $\sigma{egap_k^r}(\nu)$ }
	\noindent Here, we study the asymptotic formulas for $\sigma{gap_k^r}(\nu),$ $\sigma{ogap_k^r}(\nu),$ and $\sigma{egap_k^r}(\nu)$.
	\begin{theorem}\label{Asymp_Least_gap}
		For fixed $r,k\in\mathbb{N},$ we have
		$$\sigma{gap_k^r(\nu)}\sim \frac{k^{\frac{k-2}{4}}}{2^{\frac{k+3}{2}}\cdot6^{\frac{k}{4}}\cdot r^{\frac{1}{2}}\cdot \nu^{\frac{k+2}{4}}}\exp\!\left( 2\pi \sqrt{\frac{k\nu}{6}} \right)~~~~\text{as}~\nu \to \infty.$$
	\end{theorem}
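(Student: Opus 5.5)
We follow the proof of Theorem \ref{Asymp_mex_k}, now starting from the generating function of Theorem \ref{least_gap}:
$$\mathcal{D}_{k,r}(q):=\sum_{\nu=0}^{\infty}\sigma gap_k^r(\nu)q^{\nu}=\frac{(q^{2kr};q^{2kr})_\infty(-q^{kr};q^{kr})_\infty}{(q;q)_\infty^k}.$$
The plan has three steps: establish monotonicity so that Theorem \ref{ingham} applies, compute the behaviour of $\mathcal{D}_{k,r}(e^{-y})$ as $y\to0^+$ from \eqref{result_1} and \eqref{result_2}, and read off the constants.

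\textbf{Monotonicity.} We first show that $\{\sigma gap_k^r(\nu)\}$ is weakly increasing, imitating the map $\Gamma_1$ of Lemma \ref{lemma_1}. Define $\Gamma:\mathcal{P}_k(\nu)\to\mathcal{P}_k(\nu+1)$ as follows.
\begin{itemize}
\item[i.] If $gap_k^r(\Lambda)$ has size at least $2$, then every colour $1_j$ already occurs at least $r$ times. Adding one more $1_1$ leaves the least $r$-gap unchanged.
\item[ii.] If $gap_k^r(\Lambda)=1_i$ for some $i$, increase the largest part of $\Lambda$ by $1$ (keeping its colour).
\end{itemize}
In case ii the multiplicities of the $1_j$ can change only when $\Lambda$ consists solely of $1$'s. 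We treat that situation separately, for instance by sending it to the partition obtained by replacing one $1$ with a $2_1$, and check that the gap does not decrease. Injectivity follows as in Lemma \ref{lemma_1}: the two cases are distinguished by whether the image has a $1$-gap, and each case can be inverted. This bookkeeping is the only delicate point. Alternatively, we can note that $\mathcal{D}_{k,r}(q)(1-q)$ has nonnegative coefficients: for $k\ge1$ it equals a product with $1/(q;q)_\infty^{k-1}\cdot\frac{1}{(q^2;q)_\infty}$ and remaining factors whose coefficients should be checked to be nonnegative. The combinatorial injection is the route I would actually pursue.

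\textbf{Asymptotics of the generating function.} Substitute $q=e^{-y}$. Applying \eqref{result_1} with $y$ replaced by $2kry$ gives
$$(e^{-2kry};e^{-2kry})_\infty\sim\sqrt{\frac{\pi}{kry}}\,e^{-\pi^2/(12kry)}.$$
Applying \eqref{result_2} with $y$ replaced by $kry$ gives
$$(-e^{-kry};e^{-kry})_\infty\sim\frac{1}{\sqrt2}\,e^{\pi^2/(12kry)}.$$
The two exponentials cancel, exactly as in the case $r=1$. Combining with $(e^{-y};e^{-y})_\infty^{-k}\sim (y/2\pi)^{k/2}e^{k\pi^2/(6y)}$ yields
$$\mathcal{D}_{k,r}(e^{-y})\sim\frac{y^{\frac{k-1}{2}}\,e^{\frac{k\pi^2}{6y}}}{2^{\frac{k+1}{2}}\sqrt{kr}\,\pi^{\frac{k-1}{2}}}.$$
This is the $r=1$ expression divided by $\sqrt r$.

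\textbf{Applying Ingham.} Use Theorem \ref{ingham} with
$$\rho=\frac{1}{2^{\frac{k+1}{2}}\sqrt{kr}\,\pi^{\frac{k-1}{2}}},\qquad \tau=\frac{k-1}{2},\qquad \xi=\frac{k\pi^2}{6}.$$
The simplification is identical to the one in the proof of Theorem \ref{Asymp_mex_k}, apart from the extra factor $r^{-1/2}$, and gives the stated formula.

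The analytic part is routine. The main obstacle is the weakly increasing property, in particular the edge cases of the injection where the partition consists only of $1$'s and has $1$-multiplicities below $r$.
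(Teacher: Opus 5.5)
Your route is the paper's own. The paper proves this theorem by asserting monotonicity along the lines of Lemma \ref{lemma_1} and then repeating the computation of Theorem \ref{Asymp_mex_k} with the generating function of Theorem \ref{least_gap}. Your evaluation of $\mathcal{D}_{k,r}(e^{-y})$ and the resulting constants (the $r=1$ value of $\rho$ divided by $\sqrt r$) are correct.

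The one thing to fix is your special treatment of all-$1$'s partitions in case ii, which is unnecessary and, as stated, breaks injectivity.

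\textbf{Why it is unnecessary.} Raising the largest part $1_c$ to $2_c$ can only lower the multiplicity of the deficient colour $1_i$, so the image still has a least $r$-gap of size $1$. The map is undone by lowering the unique largest part. Your unmodified $\Gamma$ therefore already works in every case of ii.

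\textbf{Why the patch fails.} Replacing a $1$ by $2_1$ forgets which colour was removed. For $k=r=2$, the three partitions $1_1+1_1$, $1_2+1_1$, $1_2+1_2$ all have a gap of size $1$, but there are only two candidate images, $2_1+1_1$ and $2_1+1_2$.

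\textbf{Your alternative route.} The generating-function argument you sketched also closes at once. By Euler's and Gauss's identities the leftover factor is $(q^{2kr};q^{2kr})_\infty(-q^{kr};q^{kr})_\infty=\sum_{m\ge0}q^{kr\,m(m+1)/2}$. Hence $(1-q)\mathcal{D}_{k,r}(q)$ is a product of series with nonnegative coefficients. This gives monotonicity more cleanly than the injection, and it also covers the $\nu=0$ case, where the map has no largest part to act on.
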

	\begin{theorem}\label{Asymp_Least_O_E_gap}
		For fixed $r,k\in\mathbb{N},$ we have
		$$\sigma{ogap_k^r(\nu)}\sim\sigma{egap_k^r(\nu)}\sim \frac{k^{\frac{k-2}{4}}}{2^{\frac{k+2}{2}}\cdot6^{\frac{k}{4}}
			\cdot r^{\frac{1}{2}}\cdot \nu^{\frac{k+2}{4}}}\exp\!\left( 2\pi \sqrt{\frac{k\nu}{6}} \right)~~~~\text{as}~\nu \to \infty.$$
	\end{theorem}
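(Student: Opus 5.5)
The plan is to run the argument of the proof of Theorem \ref{Asymp_moex_k_meex_k} again, now using the generating functions of Theorems \ref{least_odd_gap} and \ref{least_even_gap}, and then apply Ingham's Theorem \ref{ingham}. I would first establish the monotonicity hypothesis of Ingham's theorem. Rather than build an injection as in Lemma \ref{lemma_1}, I would argue analytically, because adding $1_1$ can change an odd $r$-gap of size $1$.

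\textbf{Monotonicity.} By \eqref{JTP_2} with $k$ replaced by $kr$, the numerator $(-q^{kr};q^{2kr})^2_\infty(q^{2kr};q^{2kr})_\infty$ equals $\sum_{g\in\mathbb{Z}}q^{krg^2}$. In the same way, $2(q^{4kr};q^{4kr})_\infty(-q^{2kr};q^{2kr})_\infty$ equals $\sum_{g\in\mathbb{Z}}q^{2kr g(g+1)}$, by \eqref{JTP_1} at $z=1$ with $k\to 2kr$. Both are series with nonnegative coefficients. Writing $(q;q)_\infty^k=(1-q)(q^2;q)_\infty(q;q)_\infty^{k-1}$, we get $(1-q)\sum_\nu \sigma ogap_k^r(\nu)q^\nu$ as a product of series with nonnegative coefficients, and the same holds for $egap$. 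Hence both sequences are weakly increasing, and they are nonnegative. The radius of convergence is $1$, as is clear from the product forms.

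\textbf{Asymptotics at $q=e^{-y}$.} For the odd case, I would rewrite the numerator as in \eqref{B_k(q)}:
$$\frac{(-q^{kr};q^{kr})^2_\infty(q^{2kr};q^{2kr})_\infty}{(-q^{2kr};q^{2kr})^2_\infty}.$$
I then apply \eqref{result_1} and \eqref{result_2} with $y$ replaced by $kry$, $2kry$, etc. The exponentials contribute
$$\frac{\pi^2}{6kry}-\frac{\pi^2}{12kry}-\frac{\pi^2}{12kry}=0,$$
and the constants give $\frac{1/2}{1/2}\sqrt{\pi/(kry)}$. So the numerator is $\sim \sqrt{\pi/(kry)}$. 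For the even case, $2(e^{-4kry};e^{-4kry})_\infty(-e^{-2kry};e^{-2kry})_\infty\sim 2\cdot\sqrt{\pi/(2kry)}\cdot\frac{1}{\sqrt2}=\sqrt{\pi/(kry)}$, so it has the same asymptotic. Multiplying by $(q;q)_\infty^{-k}\sim (y/2\pi)^{k/2}e^{k\pi^2/(6y)}$ gives, in both cases,
$$\frac{y^{\frac{k-1}{2}}e^{\frac{k\pi^2}{6y}}}{2^{\frac k2}\sqrt{kr}\,\pi^{\frac{k-1}{2}}}.$$

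\textbf{Applying Ingham.} Theorem \ref{ingham} applies with $\rho=\bigl(2^{k/2}\sqrt{kr}\,\pi^{(k-1)/2}\bigr)^{-1}$, $\tau=\frac{k-1}{2}$ and $\xi=\frac{k\pi^2}{6}$. The simplification is identical to the one in the proof of Theorem \ref{Asymp_moex_k_meex_k}, with the single extra factor $r^{-1/2}$. This yields the stated formula for both sequences, and hence also $\sigma ogap_k^r(\nu)\sim\sigma egap_k^r(\nu)$.

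The main obstacle is the weakly increasing hypothesis. The analytic factorization above avoids the combinatorial difficulty, so the only point to check is the nonnegativity of the theta-type numerators. That nonnegativity is immediate from the sum forms given by \eqref{JTP_1} and \eqref{JTP_2}.
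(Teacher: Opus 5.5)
Your proof is correct. The asymptotic step is exactly what the paper means by "repeat the proof of Theorem \ref{Asymp_moex_k_meex_k}": both numerators are $\sim\sqrt{\pi/(kry)}$ at $q=e^{-y}$, and Ingham's theorem is then applied with $\rho=\bigl(2^{k/2}\sqrt{kr}\,\pi^{(k-1)/2}\bigr)^{-1}$. This produces the single extra factor $r^{-1/2}$, and your computation of it checks out.

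The genuine difference is the monotonicity step. The paper appeals to the injection $\mathcal{P}_k(\nu)\to\mathcal{P}_k(\nu+1)$ of Lemma \ref{lemma_1}. You instead write $(1-q)\sum_\nu\sigma ogap_k^r(\nu)q^\nu$, and likewise for $egap$, as a series with nonnegative coefficients divided by $(q^2;q)_\infty(q;q)_\infty^{k-1}$.

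What your route buys:
\begin{itemize}
\item It is shorter and avoids all case analysis.
\item It works verbatim for every asymptotic theorem in the paper, including Theorems \ref{Asymp_mex_k}, \ref{Asymp_Least_gap} and \ref{Asymp_cubic}. Each numerator there is a theta series with nonnegative coefficients, and each denominator contains the factor $1-q$.
\item It does use the product formulas of Theorems \ref{least_odd_gap} and \ref{least_even_gap}, but the asymptotic step needs them anyway, so this costs nothing here.
\end{itemize}
What the injection buys is independence from any product formula.

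Your stated reason for avoiding the injection is not actually an obstruction. Case (ii) of Lemma \ref{lemma_1} exists precisely for gaps of size $1$. Enlarging the largest part never creates a part of size $1$, so some $1_i$ still has multiplicity $<r$. Hence the odd gap keeps size $1$ (only its color may change), and the size is all that $\sigma ogap_k^r$ records. For $egap$ one can simply always add $1_1$.

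One slip to fix concerns the even numerator. Apply \eqref{JTP_1} at $z=1$ with $k\to kr$, and use $(-1;q^{2kr})_\infty=2(-q^{2kr};q^{2kr})_\infty$. This gives $2(q^{4kr};q^{4kr})_\infty(-q^{2kr};q^{2kr})_\infty=\sum_{g\in\mathbb{Z}}q^{kr\,g(g+1)}$, not $\sum_g q^{2kr\,g(g+1)}$. Since only nonnegativity of the coefficients is used, nothing else changes.
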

	\noindent Furthermore, building upon the work of Baruah \textit{et al.}
	\cite{baruah}, we generalize the functions $\sigma_o mex_k(\nu)$ and $\sigma_e mex_k(\nu)$ to the case of least $r$-gaps and derive corresponding generating functions for them. This work complements the  results presented in Theorems \ref{baruah_odd} and \ref{baruah_even}.
	\begin{theorem} \label{least_gap_odd}
		For fixed $r,k\in\mathbb{N}$ and $\nu \in \mathbb{Z}_{\geq0},$ we have
		$$\sum\limits_{\nu=0}^{\infty}\sigma_o gap_k^r(\nu)q^{\nu} =\dfrac{1}{2(q;q)^k_\infty}\left\{(-q^{kr};q^{kr})_\infty(q^{2kr};q^{2kr})_\infty+(q^{kr};q^{kr})^3_\infty\right\}.$$
	\end{theorem}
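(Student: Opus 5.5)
We follow the proof of Theorem \ref{baruah_odd} essentially verbatim, with the colored part of size $t$ now required to occur at least $r$ times instead of once. The first step is a generating series in auxiliary variables $z_1,\dots,z_k$, in which the coefficient of $z_\mathfrak{j}^{2g+1}q^\nu$ counts $k$-colored partitions $\Lambda$ of $\nu$ with $gap_k^r(\Lambda)=(2g+1)_\mathfrak{j}$. Such a partition has three features:
\begin{itemize}
\item every colored integer $t_i$ with $t\le 2g$ and $1\le i\le k$ occurs at least $r$ times, contributing $q^{rk(1+2+\cdots+2g)}=q^{rk\,g(2g+1)}$;
\item the colors $(2g+1)_1,\dots,(2g+1)_{\mathfrak{j}-1}$ occur at least $r$ times, contributing $q^{r(2g+1)(\mathfrak{j}-1)}$;
\item $(2g+1)_\mathfrak{j}$ occurs fewer than $r$ times, contributing the factor $(1-q^{r(2g+1)})$ against the free factor $1/(q;q)_\infty^k$.
\end{itemize}

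Differentiating in $z_\mathfrak{j}$ at $z_\mathfrak{j}=1$ and summing over $\mathfrak{j}$ telescopes the geometric sum, exactly as before:
$$(1-q^{r(2g+1)})\sum_{\mathfrak{j}=1}^k q^{r(2g+1)(\mathfrak{j}-1)}=1-q^{kr(2g+1)}.$$
This gives
$$\sum_{\nu\ge0}\sigma_o gap_k^r(\nu)q^\nu=\frac{1}{(q;q)_\infty^k}\sum_{g\ge0}(2g+1)\Bigl(q^{kr\binom{2g+1}{2}}-q^{kr\binom{2g+2}{2}}\Bigr).$$
This is precisely the intermediate expression in the proof of Theorem \ref{baruah_odd} with $k$ replaced by $kr$ inside the theta-type sum, while the prefactor $1/(q;q)_\infty^k$ keeps the original $k$.

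From here we repeat the manipulations of that proof with $k\to kr$ in the exponents. Adding and subtracting $\sum_g q^{kr\binom{2g+2}{2}}$ turns the sum into
$$\sum_{g\ge0}(g+1)(-1)^g q^{kr\binom{g+1}{2}}+\sum_{g\ge0}q^{kr\binom{2g+2}{2}}.$$
We then split $2(g+1)=(2g+1)+1$ and regroup into
$$\frac12\Bigl(\sum_{g\ge0}(2g+1)(-1)^g q^{kr\binom{g+1}{2}}+\sum_{g\ge0}q^{kr\binom{g+1}{2}}\Bigr).$$
Jacobi's identity evaluates the first sum as $(q^{kr};q^{kr})_\infty^3$. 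Gauss's identity evaluates the second as $(q^{2kr};q^{2kr})_\infty/(q^{kr};q^{2kr})_\infty$, and Euler's identity rewrites this as $(-q^{kr};q^{kr})_\infty(q^{2kr};q^{2kr})_\infty$. This yields the stated formula.

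The only real care point is the first step. One must check that "least $r$-gap equals $(2g+1)_\mathfrak{j}$" is correctly encoded: the lexicographic order forces all smaller colored integers, including the smaller colors of the same size, to have multiplicity at least $r$, while larger ones are unrestricted. One must also check that the factor $1-q^{r(2g+1)}$ correctly restricts the multiplicity of $(2g+1)_\mathfrak{j}$ to $0,\dots,r-1$. Once this is verified, the rest is identical to the $r=1$ computation after the substitution $k\mapsto kr$ in the exponents, so no new identities are needed.
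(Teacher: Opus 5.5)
Your proposal is correct and is essentially the paper's argument. The paper only says to combine the generating-function setup from the sketch of Theorem \ref{least_gap} with the computation in Theorem \ref{baruah_odd}, replacing $k$ by $kr$ in the exponents; there the gap's multiplicity is encoded by $\sum_{\mathfrak{i}=0}^{r-1}q^{\mathfrak{i}(2g+1)}(1-q^{2g+1})$, which equals your factor $1-q^{r(2g+1)}$, and your write-up carries out each of the remaining steps (telescoping over colors, the alternating regrouping, and the Jacobi, Gauss and Euler evaluations) correctly and more explicitly than the paper does.
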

	\begin{theorem} \label{least_gap_even}
		For fixed $r,k\in\mathbb{N}$ and $n \in \mathbb{Z}_{\geq0},$ we have
		$$\sum\limits_{\nu=0}^{\infty}\sigma_e gap_k^r(\nu)q^{\nu} =\dfrac{1}{2(q;q)^k_\infty}\left\{(-q^{kr};q^{kr})_\infty(q^{2kr};q^{2kr})_\infty-(q^{kr};q^{kr})^3_\infty\right\}.$$
	\end{theorem}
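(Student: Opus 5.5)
The plan for Theorem \ref{least_gap_even} is to deduce it from two generating functions: the total-gap function of Theorem \ref{least_gap} and the odd-gap function of Theorem \ref{least_gap_odd}, exactly as Theorem \ref{baruah_even} was obtained from Theorems \ref{Mex_k} and \ref{baruah_odd}. The starting observation is that every $k$-colored partition $\Lambda$ has a well-defined least $r$-gap $gap_k^r(\Lambda)$, whose size is either odd or even. Here the parity conditions $2\nmid gap_k^r(\Lambda)$ and $2\mid gap_k^r(\Lambda)$ are read, as in the definitions of Section \ref{sec2}, as conditions on the size of the colored part. These two cases are mutually exclusive and exhaustive, so for every $\nu\geq 0$
\begin{equation*}
\sigma gap_k^r(\nu)=\sigma_o gap_k^r(\nu)+\sigma_e gap_k^r(\nu).
\end{equation*}

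Multiplying by $q^{\nu}$ and summing, I would then subtract the generating function of Theorem \ref{least_gap_odd} from that of Theorem \ref{least_gap}. The first is $(q^{2kr};q^{2kr})_\infty(-q^{kr};q^{kr})_\infty/(q;q)_\infty^k$, which can be rewritten as
\begin{equation*}
\frac{1}{2(q;q)^k_\infty}\left\{2(-q^{kr};q^{kr})_\infty(q^{2kr};q^{2kr})_\infty\right\}.
\end{equation*}
Subtracting $\frac{1}{2(q;q)^k_\infty}\{(-q^{kr};q^{kr})_\infty(q^{2kr};q^{2kr})_\infty+(q^{kr};q^{kr})^3_\infty\}$ leaves precisely the claimed expression.

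There is no real obstacle, provided Theorems \ref{least_gap} and \ref{least_gap_odd} are taken as established, since the paper allows assuming earlier stated results. If one wanted a self-contained route instead, I would mirror the proof of Theorem \ref{baruah_odd} with even $mex$ values $2g$. The first $2g-1$ sizes each occur with multiplicity at least $r$ in all $k$ colors, which contributes $q^{kr\binom{2g}{2}+2g r(\mathfrak{j}-1)}(1-q^{2gr})$ after summing over the colors. Summing over $\mathfrak{j}$ then gives $\sum_{g\geq1}2g\,q^{kr\binom{2g}{2}}(1-q^{2gkr})$. From there I would apply Jacobi's identity and Gauss's identity with $q\mapsto q^{kr}$.

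In that direct route the only delicate step would be bookkeeping the boundary terms when converting $\sum_{g}(-1)^{g}(g+1)q^{kr\binom{g+1}{2}}$ into the cube $(q^{kr};q^{kr})_\infty^3$. The subtraction argument avoids this entirely, so that is the proof I would present.
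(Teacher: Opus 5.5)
Your proposal is correct and follows the paper's approach. The paper only says that Theorem \ref{least_gap_even} is obtained by repeating the $r=1$ arguments, and the $r=1$ analogue, Theorem \ref{baruah_even}, is proved exactly by subtracting the odd generating function from the total one; you do the same with Theorems \ref{least_gap} and \ref{least_gap_odd}, using that parity refers to the size of the gap so that $\sigma gap_k^r(\nu)=\sigma_o gap_k^r(\nu)+\sigma_e gap_k^r(\nu)$.
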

	
	\subsection{Proofs of Theorems \ref{least_gap}~-  \ref{least_even_gap}, \ref{least_gap_odd}, and \ref{least_gap_even}}
	\begin{proof}[\normalfont{\textbf{Sketch of the Proof of Theorem \ref{least_gap}}}]
		We fix $r,~k\in\mathbb{N}$ and define $\mathcal{M}_k^r(z_1,\dots,z_k,q)$ as the series in which the coefficient of $z_\mathfrak{j}^\mathfrak{m}q^{\nu}$, denoted by $|\mathcal{P}_k^r(\mathfrak{m}_\mathfrak{j},\mathfrak{i},\nu)|,$ represents the count of $k$-colored partitions $\Lambda$ of a positive integer $\nu$ with $gap_k^r(\Lambda)=\mathfrak{m}_\mathfrak{j},$ where $1\leq \mathfrak{j}\leq k.$
		\begin{align*}
			&\mathcal{M}_k^r(z_1,z_2,\dots,z_k,q):=\sum\limits_{\nu=0}^{\infty}\sum\limits_{\mathfrak{m}=1}^{\infty}\sum\limits_{\mathfrak{j}=1}^{k}\sum\limits_{\mathfrak{i}=0}^{r-1}|\mathcal{P}_k^r(\mathfrak{m}_\mathfrak{j},\mathfrak{i},\nu)|z_\mathfrak{j}^\mathfrak{m}q^{\nu}\\
			&=\sum\limits_{\mathfrak{m}=1}^{\infty}\sum\limits_{\mathfrak{j}=1}^{k}\sum\limits_{\mathfrak{i}=0}^{r-1}q^{\scalebox{0.55}{$
					\underbrace{\underbrace{(1+\cdots+1)}_{r\text{ times}} + \cdots + 
						\underbrace{(1+\cdots+1)}_{r\text{ times}}}_{k\text{ times}} +
					\underbrace{\underbrace{(2+\cdots+2)}_{r\text{ times}} + \cdots +
						\underbrace{(2+\cdots+2)}_{r\text{ times}}}_{k\text{ times}}+\cdots+\underbrace{\underbrace{((\mathfrak{m}-1)+\cdots+(\mathfrak{m}-1))}_{r\ \text{times}} + \cdots + 
						\underbrace{((\mathfrak{m}-1)+\cdots+(\mathfrak{m}-1))}_{r\ \text{times}}}_{k\ \text{times}}
					$}}\\
			&\quad\quad\quad\quad\quad\quad~ q^{\scalebox{0.55}{$
					\underbrace{\underbrace{(\mathfrak{m}+\cdots+\mathfrak{m})}_{r\ \text{times}} + \cdots + 
						\underbrace{(\mathfrak{m}+\cdots+\mathfrak{m})}_{r\ \text{times}}}_{(\mathfrak{j}-1)\ \text{times}}+\underbrace{(\mathfrak{m}+\cdots+\mathfrak{m})}_{\mathfrak{i} \text{ times}}$}}(1-q^\mathfrak{m})z_\mathfrak{j}^\mathfrak{m}\left(\prod_{\nu=1}^{\infty}\frac{1}{(1-q^{\nu})^k}\right)\\
			&=\sum\limits_{\mathfrak{m}=1}^{\infty}\sum\limits_{\mathfrak{j}=1}^{k}\sum\limits_{\mathfrak{i}=0}^{r-1}	q^{k\cdot r(1+2+\cdots+(\mathfrak{m}-1))+(\mathfrak{j}-1)r\cdot \mathfrak{m}+\mathfrak{i}\cdot \mathfrak{m}}(1-q^\mathfrak{m})z_\mathfrak{j}^\mathfrak{m}\left(\prod_{\nu=1}^{\infty}\frac{1}{(1-q^{\nu})^k}\right).
		\end{align*}
		Now, by following the method used in the proof of Theorem \ref{Mex_k}, we obtain
		$$ \sum\limits_{\nu=0}^{\infty}\sigma{gap_k^r}(\nu)q^{\nu} = \dfrac{(q^{2kr};q^{2kr})_\infty(-q^{kr};q^{kr})_\infty}{(q;q)_{\infty}^k}.	$$
		This concludes the proof.
	\end{proof}
	\noindent By employing a similar approach as in the sketch of the proof of Theorem \ref{Mex_k}, one can prove Theorems \ref{least_odd_gap}, \ref{least_even_gap}, \ref{least_gap_odd}, and \ref{least_gap_even}.
	\subsection{Proofs of Theorems \ref{Asymp_Least_gap} and \ref{Asymp_Least_O_E_gap}}
	Following the line of proof of Lemma \ref{lemma_1}, it can be shown that the sequences $\{\sigma gap_k^r(\nu)\},$ $\{\sigma ogap_k^r(\nu)\}$, and $\{\sigma egap_k^r(\nu)\}$ are weakly increasing. The proofs of Theorems \ref{Asymp_Least_gap} and \ref{Asymp_Least_O_E_gap} then follow on the similar arguments as in the proofs of Theorems \ref{Asymp_mex_k} and \ref{Asymp_moex_k_meex_k} respectively.
	\section{Minimal excludant in generalized cubic partitions}\label{sec5}
	A cubic partition of a positive integer $\nu$ is a partition in which each even part may appear in two different colors. Let $C(\nu)$ denote the set of all cubic partitions of a positive integer $\nu$. For example, 
	$$C(4)=\{4_2,~4_1,~3_1+1_1,~2_2+2_2,~2_2+2_1,~2_1+2_1,~2_2+1_1+1_1,~2_1+1_1+1_1,
	~1_1+1_1+1_1+1_1\},$$
	and $|C(4)| = 9.$ Now, let the minimal excludant of a cubic partition $\Lambda$ be denoted by  $mex_{\{2,1\}}(\Lambda).$ For example, the minimal excludants in the cubic partitions of $4$ are $1_1,~1_1,~2_1,~1_1,~1_1,~1_1,~2_1,~2_2,$ and $2_1,$ respectively. The sum of the minimal excludants across all cubic partitions of a positive integer $\nu$ is denoted by $\sigma mex_{\{2,1\}}(\nu)$. Thus, $\sigma mex_{\{2,1\}}(4)=13$.\\
	We define a generalized form of cubic partitions wherein each even part is allowed to appear in $l$ distinct colors and each odd part may appear in $s$ distinct colors. When $s=l,$ this reduces to the case of $l$-colored partitions. This generalization is referred to as the $\{l,s\}$-colored
	partition. Let $|C_{\{l,s\}}(\nu)|$ represent the count of the $\{l,s\}$-colored partitions of $\nu$, then the generating function for $|C_{\{l,s\}}(\nu)|$ is
	$$\sum\limits_{\nu=0}^{\infty}|C_{\{l,s\}}(\nu)|q^{\nu} =\prod _{\nu=1}^{\infty}\frac{1}{(1-q^{2\nu-1})^s~(1-q^{2\nu})^l}=\frac{1}{(q;q^2)_\infty^s~(q^2;q^2)_\infty^l} .$$
	For fixed $l,~s\in\mathbb{N},$ let $\sigma{mex}_{\{l,s\}}(\nu)$ represents the sum of the minimal excludants across all the $\{l,s\}$-colored partitions of $\nu$.
	\begin{theorem} \label{Mex_cubic}
		For fixed $l,s\in\mathbb{N}$ and $\nu \in \mathbb{Z}_{\geq0},$ we have 
		\begin{align*}
			\sum\limits_{\nu=0}^{\infty}\sigma{mex}_{\{l,s\}}(\nu)q^{\nu}=\frac{(-q^{s+2l};q^{2(s+l)})_\infty(-q^s;q^{2(s+l)})_\infty(q^{2(s+l)};q^{2(s+l)})_\infty}{(q;q^2)_\infty^s~(q^2;q^2)_\infty^l}.
		\end{align*}
	\end{theorem}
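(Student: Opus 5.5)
We follow the scheme of the proof of Theorem \ref{Mex_k}. The one new feature is that the number of available colors now depends on the parity of the part size. Put $c(i)=s$ if $i$ is odd and $c(i)=l$ if $i$ is even. A colored positive integer of size $i$ then comes in exactly $c(i)$ colors.

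\textbf{Step 1: the refined generating function.} For an $\{l,s\}$-colored partition $\Lambda$ we have $mex_{\{l,s\}}(\Lambda)=\mathfrak{m}_\mathfrak{j}$, with $1\le \mathfrak{j}\le c(\mathfrak{m})$, exactly when three things hold:
\begin{itemize}
\item every colored integer $i_{t}$ with $i<\mathfrak{m}$ and $1\le t\le c(i)$ occurs;
\item the colored integers $\mathfrak{m}_1,\dots,\mathfrak{m}_{\mathfrak{j}-1}$ occur;
\item $\mathfrak{m}_\mathfrak{j}$ does not occur.
\end{itemize}
Write
$$S(\mathfrak{m})=\sum_{i=1}^{\mathfrak{m}} c(i)\, i .$$
Removing one copy of each forced part, and removing the part $\mathfrak{m}_\mathfrak{j}$ from the product, the count of such partitions has generating function
$$q^{S(\mathfrak{m}-1)+(\mathfrak{j}-1)\mathfrak{m}}\,(1-q^{\mathfrak{m}})\,\frac{1}{(q;q^2)_\infty^s(q^2;q^2)_\infty^l}.$$
Attach $z_\mathfrak{j}^{\mathfrak{m}}$, differentiate at $z_\mathfrak{j}=1$, and sum over $\mathfrak{j}$. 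The geometric sum gives
$$(1-q^{\mathfrak{m}})\sum_{\mathfrak{j}=1}^{c(\mathfrak{m})} q^{(\mathfrak{j}-1)\mathfrak{m}}=1-q^{c(\mathfrak{m})\mathfrak{m}}.$$
Hence
$$\sum_{\nu\ge0}\sigma mex_{\{l,s\}}(\nu)q^\nu=\frac{1}{(q;q^2)_\infty^s(q^2;q^2)_\infty^l}\sum_{\mathfrak{m}\ge1}\mathfrak{m}\left(q^{S(\mathfrak{m}-1)}-q^{S(\mathfrak{m})}\right).$$

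\textbf{Step 2: telescoping.} Summation by parts turns the last sum into $\sum_{\mathfrak{m}\ge0} q^{S(\mathfrak{m})}$, with $S(0)=0$. The tail is harmless because $S(\mathfrak{m})$ grows quadratically, so everything converges for $|q|<1$. This plays the role of the index shifts in the proofs of Theorems \ref{Mex_k} and \ref{Moex_k}.

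\textbf{Step 3: evaluating $S$.} For even $\mathfrak{m}=2t$ the odd sizes contribute $s(1+3+\cdots+(2t-1))=st^2$ and the even sizes contribute $l(2+4+\cdots+2t)=lt(t+1)$. So
$$S(2t)=st^2+lt(t+1).$$
For odd $\mathfrak{m}=2t+1$,
$$S(2t+1)=s(t+1)^2+lt(t+1).$$
This value is $st'^2+lt'(t'+1)$ with $t'=-(t+1)$. As $t$ runs over $\mathbb{Z}_{\ge0}$, $t'$ runs over the negative integers. Therefore
$$\sum_{\mathfrak{m}\ge0}q^{S(\mathfrak{m})}=\sum_{\nu=-\infty}^{\infty}q^{s\nu^2+l\nu(\nu+1)}.$$
Applying \eqref{JTP_3} gives exactly the stated product
$$(-q^{s+2l};q^{2(s+l)})_\infty(-q^s;q^{2(s+l)})_\infty(q^{2(s+l)};q^{2(s+l)})_\infty,$$
divided by $(q;q^2)_\infty^s(q^2;q^2)_\infty^l$.

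The only delicate point is Step 3: checking that the even and odd cases of $S(\mathfrak{m})$ fit together into one bilateral quadratic form through the reflection $t\mapsto -(t+1)$, since this is what makes \eqref{JTP_3} applicable. As a check, the case $s=l=k$ must reduce to Theorem \ref{Mex_k}. It does: there $S(\mathfrak{m})=k\mathfrak{m}(\mathfrak{m}+1)/2$.
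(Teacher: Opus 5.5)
Your proof is correct and takes essentially the paper's route. Both arguments condition on the value $\mathfrak{m}_\mathfrak{j}$ of the excludant, differentiate in $z_\mathfrak{j}$, collapse the color sum to $1-q^{c(\mathfrak{m})\mathfrak{m}}$, reduce to $\sum_{\nu\in\mathbb{Z}} q^{s\nu^2+l\nu(\nu+1)}$, and finish with \eqref{JTP_3}. The only difference is bookkeeping: the paper handles even and odd excludants as two separate series with explicit index shifts, while your single $S(\mathfrak{m})$, summation by parts, and reflection $t\mapsto -(t+1)$ give the same cancellations more compactly.
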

	\begin{proof}[\normalfont\textbf{Proof}]
		We fix $l,s\in\mathbb{N}$ and define the series $\mathcal{M}_{\{l,s\}}(z_1,\dots,z_l,w_1,\dots,w_s,q)$ which is divided into two subseries such that 
		\begin{eqnarray}\label{mex_l_s}
			\mathcal{M}_{\{l,s\}}(z_1,\dots,z_l,w_1,\dots,w_s,q) = \mathcal{M}^E_{\{l,s\}}(z_1,\dots,z_l,q) + \mathcal{M}^O_{\{l,s\}}(w_1,\dots,w_s,q)
		\end{eqnarray}
		In the series $\mathcal{M}^E_{\{l,s\}}(z_1,\dots,z_l,q),$ the coefficient of $z_\mathfrak{i}^{2g}q^{\nu},$ denoted by $|\mathcal{P}_{\{l,s\}}((2g)_\mathfrak{i},\nu)|,$ represents the count of $\{l,s\}$-colored partitions of $\nu$ with minimal excludant = $(2g)_\mathfrak{i}$ for some $g\in\mathbb{N},$ where $1\leq \mathfrak{i}\leq l$.
		\begin{eqnarray}
			&&\mathcal{M}^E_{\{l,s\}}(z_1,\dots,z_l,q)  := \sum\limits_{\nu=0}^{\infty}\sum\limits_{g=1}^{\infty}\sum\limits_{\mathfrak{i}=1}^{l}|\mathcal{P}_{\{l,s\}}((2g)_\mathfrak{i},\nu)|z_\mathfrak{i}^{2g}q^{\nu}\nonumber\\
			&&=\left(\frac{\sum\limits_{g=1}^{\infty}\sum\limits_{\mathfrak{i}=1}^{l}q^{\{\scalebox{0.6}{$\underbrace{(1+\cdots+1)}_{s~\text{times}}+\underbrace{(2+\cdots+2)}_{l~\text{times}}+\cdots+\underbrace{((2g-1)+\cdots+(2g-1))}_{s~\text{times}}+\underbrace{(2g+\cdots+2g)}_{(\mathfrak{i}-1)~\textup{times}}$}\}}\left(1-q^{2g}\right)z_\mathfrak{i}^{2g}}{(q;q^2)_\infty^s~(q^2;q^2)_\infty^l}\right)\nonumber\\
			&&=\left(\frac{\sum\limits_{g=1}^{\infty}\sum\limits_{\mathfrak{i}=1}^{l}q^{\{s\cdot1+l\cdot2+s\cdot3+l\cdot4+\cdots+s\cdot(2g-1)+(\mathfrak{i}-1)2g\}}\left(1-q^{2g}\right)z_\mathfrak{i}^{2g}}{(q;q^2)_\infty^s~(q^2;q^2)_\infty^l}\right)\nonumber\\
			&&=\left(\frac{\sum\limits_{g=1}^{\infty}\sum\limits_{\mathfrak{i}=1}^{l}q^{\{s(1+3+\cdots+(2g-1))+l(2+4+\cdots+(2g-2))+(\mathfrak{i}-1)2g\}}\left(1-q^{2g}\right)z_\mathfrak{i}^{2g}}{(q;q^2)_\infty^s~(q^2;q^2)_\infty^l}\right)\nonumber\\
			&&=\sum\limits_{g=1}^{\infty}\sum\limits_{\mathfrak{i}=1}^{l}q^{\{sg^2+lg(g-1)+(\mathfrak{i}-1)2g\}}\left(1-q^{2g}\right)z_\mathfrak{i}^{2g}\left(\frac{1}{(q;q^2)_\infty^s~(q^2;q^2)_\infty^l}\right).\label{mex_l,s_even}
		\end{eqnarray}
		Similarly, in the series $\mathcal{M}^O_{\{l,s\}}(w_1,\dots,w_s,q),$ the coefficient of $w_\mathfrak{j}^{(2g-1)}q^{\nu},$ denoted by $|\mathcal{P}_{\{l,s\}}((2g-1)_\mathfrak{j},\nu)|,$ represents the count of $\{l,s\}$-colored partitions of $\nu$ with minimal excludant = $(2g-1)_\mathfrak{j}$ for some $g\in\mathbb{N},$ where $1\leq \mathfrak{j}\leq s$.
		\begin{eqnarray}
			&&\mathcal{M}^O_{\{l,s\}}(w_1,\dots,w_s,q)  := \sum\limits_{\nu=0}^{\infty}\sum\limits_{g=1}^{\infty}\sum\limits_{\mathfrak{j}=1}^{s}|\mathcal{P}_{\{l,s\}}((2g-1)_\mathfrak{j},\nu)|w_\mathfrak{j}^{2g-1}q^{\nu}\nonumber\\
			&&=\left(\frac{\sum\limits_{g=1}^{\infty}\sum\limits_{\mathfrak{j}=1}^{s}q^{\{\scalebox{0.6}{$\underbrace{(1+\cdots+1)}_{s~\text{times}}+\underbrace{(2+\cdots+2)}_{l~\text{times}}+\cdots+\underbrace{((2g-2)+\cdots+(2g-2))}_{l~\text{times}}+\underbrace{((2g-1)+\cdots+(2g-1))}_{(\mathfrak{j}-1)~\textup{times}}$}\}}\left(1-q^{2g-1}\right)w_\mathfrak{j}^{2g-1}}{(q;q^2)_\infty^s~(q^2;q^2)_\infty^l}\right)\nonumber\\
			&&=\left(\frac{\sum\limits_{g=1}^{\infty}\sum\limits_{\mathfrak{j}=1}^{s}q^{\{s\cdot1+l\cdot2+s\cdot3+l\cdot4+\cdots+l\cdot(2g-2)+(\mathfrak{j}-1)(2g-1)\}}\left(1-q^{2g-1}\right)w_\mathfrak{j}^{2g-1}}{(q;q^2)_\infty^s~(q^2;q^2)_\infty^l}\right) \nonumber\\
			&&=\left(\frac{\sum\limits_{g=1}^{\infty}\sum\limits_{\mathfrak{j}=1}^{s}q^{\{s(1+3+\cdots+(2g-3))+l(2+4+\cdots+(2g-2))+(\mathfrak{j}-1)(2g-1)\}}\left(1-q^{2g-1}\right)w_\mathfrak{j}^{2g-1}}{(q;q^2)_\infty^s~(q^2;q^2)_\infty^l}\right)\nonumber\\
			&&=\sum\limits_{g=1}^{\infty}\sum\limits_{\mathfrak{j}=1}^{s}q^{\{s(g-1)^2+lg(g-1)+(\mathfrak{j}-1)(2g-1)\}}\left(1-q^{2g-1}\right)w_\mathfrak{j}^{2g-1}\left(\frac{1}{(q;q^2)_\infty^s~(q^2;q^2)_\infty^l}\right).\label{mex_l,s_odd}
		\end{eqnarray}
		Now, by using equations \eqref{mex_l_s}, \eqref{mex_l,s_even}, and \eqref{mex_l,s_odd}, we obtain
		\begin{align*}
			&\mathcal{M}_{\{l,s\}}(z_1,\dots,z_l,w_1,\dots,w_s,q) 
			=\sum\limits_{g=1}^{\infty}\left\{\left(\sum\limits_{\mathfrak{i}=1}^{l}\underbrace{q^{\left\{sg^2+lg(g-1)+(\mathfrak{i}-1)2g\right\}}\left(1-q^{2g}\right)z_\mathfrak{i}^{2g}}_{\textup{Part I}}\right)\right.\\
			&\quad\quad\quad+\left.\left(\sum\limits_{\mathfrak{j}=1}^{s}\underbrace{q^{\left\{s(g-1)^2+lg(g-1)+(\mathfrak{j}-1)(2g-1)\right\}}\left(1-q^{2g-1}\right)w_\mathfrak{j}^{2g-1}}_{\textup{Part II}}\right)\right\}\left(\frac{1}{(q;q^2)_\infty^s~(q^2;q^2)_\infty^l}\right).
		\end{align*}
		Now,
		\begin{eqnarray}
			&&\sum\limits_{\nu=0}^{\infty}\sigma{mex}_{\{l,s\}}(\nu)q^{\nu}\nonumber\\
			&&:=
			\sum\limits_{g=1}^{\infty}\left\{\left(\sum\limits_{\mathfrak{i}=1}^{l}\left(\left.\dfrac{\partial}{\partial z_\mathfrak{i}}\right|_{z_\mathfrak{i}=1}\textup{Part I}\right)\right)+\right.\left.\left(\sum\limits_{\mathfrak{j}=1}^{s}\left(\left.\dfrac{\partial}{\partial w_\mathfrak{j}}\right|_{w_\mathfrak{j}=1}\textup{Part II}\right)\right)\right\}\left(\frac{1}{(q;q^2)_\infty^s~(q^2;q^2)_\infty^l}\right)\nonumber\\
			&&=\sum\limits_{g=1}^{\infty}\left\{\left(\sum\limits_{\mathfrak{i}=1}^{l}2gq^{\left\{sg^2+lg(g-1)+(\mathfrak{i}-1)2g\right\}}\left(1-q^{2g}\right)\right)+\right.\nonumber\\
			&&~~~~~~~\left.\left(\sum\limits_{\mathfrak{j}=1}^{s}(2g-1)q^{\left\{s(g-1)^2+lg(g-1)+(\mathfrak{j}-1)(2g-1)\right\}}\left(1-q^{2g-1}\right)\right)\right\}\left(\frac{1}{(q;q^2)_\infty^s~(q^2;q^2)_\infty^l}\right)\nonumber\\
			&&= \left(\frac{\sum\limits_{g=1}^{\infty}2gq^{sg^2+lg(g-1)}\left(1-q^{2lg}\right)+(2g-1)q^{s(g-1)^2+lg(g-1)}\left(1-q^{s(2g-1)}\right)}{(q;q^2)_\infty^s~(q^2;q^2)_\infty^l}\right)\nonumber \\
			&&=\! \left(\!\frac{\!\sum\limits_{g=1}^{\infty}\!2gq^{sg^2+lg(g-1)}-2gq^{sg^2+lg(g+1)}+(2g-1)q^{s(g-1)^2+lg(g-1)}-(2g-1)q^{sg^2+lg(g-1)}}{(q;q^2)_\infty^s~(q^2;q^2)_\infty^l}\!\!\right)\nonumber \\
			&&=\left(\frac{\sum\limits_{g=1}^{\infty}(2g-2g+1)q^{sg^2+lg(g-1)}-2gq^{sg^2+lg(g+1)}+(2g-1)q^{s(g-1)^2+lg(g-1)}}{(q;q^2)_\infty^s~(q^2;q^2)_\infty^l}\right)\nonumber\\
			&&=\left(\frac{\sum\limits_{g=1}^{\infty}q^{sg^2+lg(g-1)}-\sum\limits_{g=1}^{\infty}2gq^{sg^2+lg(g+1)}+\sum\limits_{g=0}^{\infty}(2g+1)q^{sg^2+lg(g+1)}}{(q;q^2)_\infty^s~(q^2;q^2)_\infty^l}\right)\nonumber\\
			&&=\left(\sum\limits_{g=1}^{\infty}q^{sg^2+lg(g-1)}+1+\sum\limits_{g=1}^{\infty}q^{sg^2+lg(g+1)}\right)\left(\frac{1}{(q;q^2)_\infty^s~(q^2;q^2)_\infty^l}\right)\nonumber\\
			&&=\left(\sum\limits_{g=-\infty}^{\infty}q^{sg^2+lg(g+1)}\right)\left(\frac{1}{(q;q^2)_\infty^s~(q^2;q^2)_\infty^l}\right).\label{mex_sum_in_cubic}
		\end{eqnarray}
		Using \eqref{JTP_3} and \eqref{mex_sum_in_cubic}, we obtain
		\begin{align*}
			\sum\limits_{\nu=0}^{\infty}\sigma{mex}_{\{l,s\}}(\nu)q^{\nu}=\frac{(-q^{s+2l};q^{2(s+l)})_\infty(q^{2(s+l)};q^{2(s+l)})_\infty(-q^s;q^{2(s+l)})_\infty}{(q;q^2)_\infty^s~(q^2;q^2)_\infty^l}.
		\end{align*}
		This concludes the proof.
	\end{proof}
	\begin{corollary}\label{cor1}
		For a positive integer $\nu,$ we have 
		\begin{align*}
			\sum\limits_{\nu=0}^{\infty}\sigma{mex}_{\{2,1\}}(\nu)q^{\nu}=\frac{(-q^{5};q^{6})_\infty(q^{6};q^{6})_\infty(-q;q^{6})_\infty}{(q;q^2)_\infty(q^2;q^2)_\infty^2}.
		\end{align*}
	\end{corollary}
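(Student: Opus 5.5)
The corollary is the special case $l=2$, $s=1$ of Theorem \ref{Mex_cubic}, since a cubic partition is exactly a $\{2,1\}$-colored partition: even parts come in two colors and odd parts in one. So the plan is to substitute these values into Theorem \ref{Mex_cubic} and simplify the resulting $q$-Pochhammer symbols.

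First I would confirm that the setup matches. The generating function $\frac{1}{(q;q^2)_\infty^s(q^2;q^2)_\infty^l}$ becomes $\frac{1}{(q;q^2)_\infty(q^2;q^2)_\infty^2}$, which is the generating function for cubic partitions. The notion of minimal excludant used in the proof of Theorem \ref{Mex_cubic} is the same one defining $mex_{\{2,1\}}(\Lambda)$. For instance, it reproduces the listed values for $C(4)$, so $\sigma mex_{\{2,1\}}(4)=13$.

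Next I would compute the parameters. We have $s+2l=5$, $2(s+l)=6$ and $s=1$. Substituting these into the numerator of Theorem \ref{Mex_cubic} gives $(-q^{5};q^{6})_\infty(-q;q^{6})_\infty(q^{6};q^{6})_\infty$. This is the claimed expression after reordering the commuting factors.

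As an optional sanity check, the intermediate form \eqref{mex_sum_in_cubic} gives the theta series $\sum_{g}q^{g^2+2g(g+1)}=\sum_g q^{3g^2+2g}$. By \eqref{JTP_3}, this equals the stated product. Expanding to low order should reproduce $13$ as the coefficient of $q^4$.

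There is no real obstacle. The only point needing care is to check that the roles of $l$ (even parts) and $s$ (odd parts) are not swapped, i.e.\ that $l=2$ and $s=1$ rather than the reverse.
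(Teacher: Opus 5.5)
Your proposal is correct and matches the paper: the paper also obtains the corollary from Theorem \ref{Mex_cubic} (it says the argument ``proceeds in the same manner''), and your substitution $l=2$, $s=1$ gives $s+2l=5$, $2(s+l)=6$ and the denominator $(q;q^2)_\infty(q^2;q^2)_\infty^2$, with the roles of $l$ and $s$ assigned correctly. Your sanity check also works: $\sum_{g\in\mathbb{Z}} q^{3g^2+2g}=1+q+q^5+\cdots$, so the coefficient of $q^4$ is $|C(4)|+|C(3)|=9+4=13$, as in the paper's example.
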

	\begin{proof}[\normalfont\textbf{Proof}] 
		The argument proceeds in the same manner as the proof of Theorem \ref{Mex_cubic}, therefore, we omit the details..
	\end{proof}
	\begin{theorem} \label{Asymp_cubic}
		We have
		$$\sigma{mex}_{\{2,1\}}(\nu)\sim \frac{1}{4\sqrt{6}\nu}\exp\left(\pi\sqrt{\nu}\right)~~~~\text{as}~\nu \to \infty.$$
		
	\end{theorem}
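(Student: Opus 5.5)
The plan is to feed the closed form of Corollary \ref{cor1} into Ingham's Tauberian theorem (Theorem \ref{ingham}), as in the proofs of Theorems \ref{Asymp_mex_k} and \ref{Asymp_moex_k_meex_k}. There are two ingredients: the monotonicity hypothesis of Theorem \ref{ingham}, and the behaviour of the generating function at $q=e^{-y}$ as $y\to0^+$.

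\emph{Monotonicity.} I will not copy the injection of Lemma \ref{lemma_1}. Raising the largest part by $1$ changes its parity, so the two colours of an even part collide: for instance $2_1$ and $2_2$ would both go to $3_1$. Instead I use the generating function directly. Put $l=2$, $s=1$ in \eqref{mex_sum_in_cubic}, and note that $(q;q^2)_\infty(q^2;q^2)_\infty^2=(q;q)_\infty(q^2;q^2)_\infty$. This gives
\begin{align*}
(1-q)\sum_{\nu\ge0}\sigma mex_{\{2,1\}}(\nu)q^\nu=\left(\sum_{g=-\infty}^{\infty}q^{3g^2+2g}\right)\frac{1}{(q^2;q)_\infty\,(q^2;q^2)_\infty}.
\end{align*}
All three factors on the right have nonnegative coefficients. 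Hence $\sigma mex_{\{2,1\}}(\nu+1)-\sigma mex_{\{2,1\}}(\nu)\ge0$, so the sequence is weakly increasing. The radius of convergence is clearly $1$.

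\emph{Asymptotics at $q=e^{-y}$.} By \eqref{result_1}, as $y\to0^+$,
\begin{align*}
\frac{1}{(q;q)_\infty(q^2;q^2)_\infty}\sim\sqrt{\frac{y}{2\pi}}\sqrt{\frac{y}{\pi}}\exp\!\left(\frac{\pi^2}{6y}+\frac{\pi^2}{12y}\right)=\frac{y}{\sqrt2\,\pi}\exp\!\left(\frac{\pi^2}{4y}\right).
\end{align*}
For the numerator of Corollary \ref{cor1} I rewrite it as
\begin{align*}
\frac{(-q;q)_\infty\,(q^6;q^6)_\infty}{(-q^2;q^2)_\infty\,(-q^3;q^6)_\infty}.
\end{align*}
Here $(-q^3;q^6)_\infty=(-q^3;q^3)_\infty/(-q^6;q^6)_\infty$, so every factor is of the form covered by \eqref{result_1} or \eqref{result_2}. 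The exponential terms cancel, since
\begin{align*}
\frac{1}{12}-\frac{1}{24}-\frac{1}{36}-\left(\frac{1}{36}-\frac{1}{72}\right)=0,
\end{align*}
and what remains is $\sqrt{\pi/(3y)}$. This is consistent with the Gaussian sum estimate $\sum_g e^{-y(3g^2+2g)}\sim\sqrt{\pi/(3y)}$, which I can use as a cross-check. Combining,
\begin{align*}
\sum_{\nu\ge0}\sigma mex_{\{2,1\}}(\nu)e^{-\nu y}\sim\frac{y^{1/2}}{\sqrt{6\pi}}\exp\!\left(\frac{\pi^2}{4y}\right).
\end{align*}

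\emph{Conclusion.} Apply Theorem \ref{ingham} with $\rho=1/\sqrt{6\pi}$, $\tau=1/2$ and $\xi=\pi^2/4$. This gives
\begin{align*}
\sigma mex_{\{2,1\}}(\nu)\sim\frac{\rho\,\xi^{1/2}}{2\sqrt\pi\,\nu}\,e^{2\sqrt{\xi\nu}}=\frac{1}{4\sqrt6\,\nu}\exp\!\left(\pi\sqrt\nu\right),
\end{align*}
which is the claimed formula. The main obstacle is the monotonicity step, because the naive combinatorial injection fails for cubic partitions. The nonnegativity of the coefficients of $(1-q)\sum_{\nu}\sigma mex_{\{2,1\}}(\nu)q^\nu$ avoids this. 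The remaining work is the bookkeeping of the exponents and constants, which must cancel exactly as shown above.
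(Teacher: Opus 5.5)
Your proof is correct. Its analytic half is the paper's own argument. The paper rewrites $\mathcal{D}(q)$ as the same quotient you reach: $(q^6;q^6)_\infty(-q;q)_\infty(-q^6;q^6)_\infty$ over $(-q^3;q^3)_\infty(q;q)_\infty(q^2;q^2)_\infty(-q^2;q^2)_\infty$. It then obtains $\sqrt{y/(6\pi)}\,e^{\pi^2/(4y)}$ and applies Theorem~\ref{ingham} with the same $\rho$, $\tau$, $\xi$.

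You genuinely differ on the monotonicity hypothesis. The paper settles it in one sentence, ``due to the same reasoning as given in the Lemma~\ref{lemma_1}''. You instead read it off from the nonnegative coefficients of $(1-q)\mathcal{D}(q)=\sum_{g}q^{3g^2+2g}/\bigl((q^2;q)_\infty(q^2;q^2)_\infty\bigr)$.

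Your objection to the paper's route is sound, and it goes further than you say. The cubic partitions of $2$ whose minimal excludant has size $1$ are $2_1$ and $2_2$, while for $3$ the only one is $3_1$. So no injection $C(2)\to C(3)$ that preserves the minimal excludant can exist, and that preservation is exactly what Lemma~\ref{lemma_1} uses for $\Gamma_1$. A combinatorial repair would need a map that is allowed to increase the excludant. Your argument therefore supplies a justification the paper's proof actually lacks.

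The price is that your argument relies on the closed form \eqref{mex_sum_in_cubic} rather than being combinatorial. The gain is a two-line rigorous check that generalizes: since $(1-q)/(q;q^2)_\infty=1/(q^3;q^2)_\infty$, the same reasoning shows that $\sigma mex_{\{l,s\}}(\nu)$ is weakly increasing for every $l,s\in\mathbb{N}$.
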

	\begin{proof}[\normalfont\textbf{Proof}]
		First we note that the sequence $\{\sigma{mex}_{\{2,1\}}(\nu)\}$ is weakly increasing due to the same reasoning as given in the Lemma \ref{lemma_1}. Now, let
		\begin{align*}
			\mathcal{D}(q):=\sum\limits_{\nu=0}^{\infty}\sigma{mex}_{\{2,1\}}(\nu)q^{\nu}
			&=\frac{(-q^{5};q^{6})_\infty(q^{6};q^{6})_\infty(-q;q^{6})_\infty}{(q;q^2)_\infty(q^2;q^2)_\infty^2}\\
			&=\frac{(q^{6};q^{6})_\infty(-q;q^{2})_\infty}{(-q^3;q^6)_\infty(q;q)_\infty(q^2;q^2)_\infty}. 
		\end{align*}
		Multiplying and dividing by $(-q^2;q^2)_\infty(-q^6;q^6)_\infty,$ we get
		\begin{eqnarray}
			\mathcal{D}(q)&=\frac{(q^{6};q^{6})_\infty(-q;q)_\infty(-q^{6};q^{6})_\infty}{(-q^3;q^3)_\infty(q;q)_\infty(q^2;q^2)_\infty(-q^{2};q^{2})_\infty}.\label{D(q)}
		\end{eqnarray}
		Upon replacing $q$ with $e^{-y}$ in the equation \eqref{D(q)} and using equations \eqref{result_1} and \eqref{result_2}, we obtain
		\begin{align*}
			\mathcal{D}(e^{-y})&=\frac{(e^{-6y};e^{-6y})_\infty(-e^{-y};e^{-y})_\infty(-e^{-6y};e^{-6y})_\infty}{(-e^{-3y};e^{-3y})_\infty(e^{-y};e^{-y})_\infty(e^{-2y};e^{-2y})_\infty(-e^{-2y};e^{-2y})_\infty}\\
			&\sim\frac{\frac{1}{\sqrt2}e^{\frac{\pi^2}{12y}}\cdot\frac{1}{\sqrt2}e^{\frac{\pi^2}{72y}}\cdot\sqrt{\frac{y}{2\pi}}e^{\frac{\pi^2}{6y}}\cdot\sqrt{\frac{2y}{2\pi}}e^{\frac{\pi^2}{12y}}}{\frac{1}{\sqrt2}e^{\frac{\pi^2}{36y}}\cdot\sqrt{\frac{6y}{2\pi}}e^{\frac{\pi^2}{36y}}\cdot\frac{1}{\sqrt2}e^{\frac{\pi^2}{24y}}}
			=\sqrt{\frac{y}{6\pi}}e^{\frac{\pi^2}{4y}} \text{~~~as~} y\rightarrow0^{+}.
		\end{align*}
		Using Theorem \ref{ingham} with $\rho=\frac{1}{\sqrt{6\pi}},$ $\tau=\frac{1}{2},$ and $\xi=\frac{\pi^2}{4}$, we get 
		\begin{align*}
			\sigma{mex}_{\{2,1\}}(\nu)\sim \frac{\frac{1}{\sqrt{6\pi}}\sqrt{\frac{\pi^2}{4}}}{2\sqrt{\pi}\nu}\exp\left(2\sqrt{\frac{\pi^2 \nu}{4}}\right)
			=\frac{1}{4\sqrt{6}\nu}\exp\left(\pi\sqrt{\nu}\right).
		\end{align*}
		This concludes the proof.
	\end{proof}
	\section{Concluding remarks}\label{sec6}
	This article presents a comprehensive study of the minimal excludants in the colored partitions. We have developed numerous generating functions that are linked to minimal excludants, with a particular emphasis on cubic partitions and $k$-colored partitions. We also examined the asymptotic formulas of the corresponding generating functions and Mathematica was used to verify the validity of our findings. In addition, we introduce a natural generalization of the minimal excludant, namely the least $r$-gap in colored partitions.\\
	An interesting aspect of this work is that it aligns with and extends previous results on minimal excludants by generalizing them from ordinary partitions to $k$-colored partitions. \\
	Several promising avenues for future research remain open within the broader context of minimal excludants. One direction is to investigate the arithmetic properties of the sums of the minimal excludants and least $r$-gaps across all colored partitions of a positive integer $\nu.$ Moreover, developing combinatorial proofs for the presented generating functions may yield additional insights.  \\
	Another direction is to extend this study to $n$-color partitions. During this work, we derived the generating function that captures the sum of the minimal excludants across all $n$-color partitions of a positive integer $\nu.$ This generating function is given by
	$$ \sum\limits_{\nu=0}^{\infty}\sigma{mex_{n}}(\nu)q^\nu =\sum\limits_{\mathfrak{m}=1}^{\infty}\mathfrak{m}q^{\left\{\frac{\mathfrak{m}(\mathfrak{m}-1)(2\mathfrak{m}-1)}{6}\right\}}(1-q^{\mathfrak{m}^2})\prod\limits_{\nu=1}^{\infty}\dfrac{1}{(1-q^\nu)^\nu},	$$
	where $\sigma{mex_{n}}(\nu)$ represents the sum of the minimal excludants across all $n$-colored partitions of a positive integer $\nu$. \\
	
	\section*{Acknowledgements}
	The first, second, and fourth author would like to thank Department of Mathematics, Thapar Institute of Engineering and Technology, Patiala for providing excellent working conditions. The second author would like to acknowledge the support of the National Board for Higher Mathematics (NBHM) via Project Reference No. 02011/40/2025/NBHM (R.P)/R\&D II/16129.
	\section*{Data Availability Statement}
The manuscript has no associated data.
	
\label{'ubl'}  
\end{document}